\newtheorem{theorem}{Theorem}[section]
\newtheorem{corollary}[theorem]{Corollary}
\newtheorem{sublemma}{Lemma}[theorem]
\newtheorem{lemma}[theorem]{Lemma}
\newtheorem*{main}{Main~Theorem}
\theoremstyle{definition}
\newtheorem{definition}[theorem]{Definition}
\newtheorem{question}[theorem]{Question}
\newtheorem*{scott}{Scott's~Problem}
\newcommand{\x}{\mathfrak{X}}
\newcommand{\y}{\mathfrak{Y}}
\newcommand{\n}{\mathbb {N}}
\newcommand{\p}{\mathbb{P}}
\newcommand{\fin}{\mathrm{Fin}}
\newcommand{\power}{\mathcal{P}}
\newcommand{\pa}{{\rm PA}}
\newcommand{\pfa}{{\rm PFA}}
\newcommand{\ssy}{{\rm SSy}}
\newcommand{\lan}{\mathcal L}
\newcommand{\la}{\langle}
\newcommand{\ra}{\rangle}
\begin{document}
\author{Victoria Gitman}
\today
\address{New York City College of Technology (CUNY),
Mathematics, 300 Jay Street, Brooklyn, NY 11201 USA}
\email{vgitman@nylogic.org}
\title{Scott's problem for Proper Scott sets}
\maketitle
\begin{abstract}
Some 40 years ago, Dana Scott proved that every countable Scott set
is the standard system of a model of \pa.  Two decades later, Knight
and Nadel extended his result to Scott sets of size $\omega_1$.
Here, I show that assuming the Proper Forcing Axiom (\pfa), every
proper Scott set is the standard system of a model of \pa. I define
that a Scott set $\x$ is proper if it is arithmetically closed and
the quotient Boolean algebra $\x/\fin$ is a proper partial order. I
also investigate the question of the existence of proper Scott sets.
\end{abstract}
\section{Introduction}
\long\def\symbolfootnote[#1]#2{\begingroup%
\def\thefootnote{\fnsymbol{footnote}}\footnote[#1]{#2}\endgroup}
\symbolfootnote[0]{I would like to thank my Ph.D. advisor Joel David
Hamkins, Roman Kossak, and Ali Enayat for their many helpful
suggestions.}In this paper, I use the Proper Forcing Axiom (\pfa) to
make partial progress on a half-century-old question in the folklore
of models of Peano Arithmetic about whether every Scott set is the
standard system of a model of \pa. The Proper Forcing Axiom is a
generalization of Martin's Axiom that has found application in many
areas of set theory in recent years. I will show that, assuming
\pfa, every arithmetically closed Scott set whose quotient Boolean
algebra $\x/\fin$ is proper is the standard system of a model of
\pa.

I will begin with some technical and historical details. We can
associate to every model $M$ of {\rm PA} a certain collection of
subsets of the natural numbers called its standard system, in short
$\ssy(M)$.  The natural numbers $\n$ form the initial segment of
every model of ${\rm PA}$. The standard system consists of sets that
arise as intersections of the definable (with parameters) sets of
the model with the \emph{standard} part $\n$. One thinks of the
standard system as the traces left by the definable sets of the
model on the natural numbers.  A different way of characterizing
sets in the standard system uses the notion of coding. Let us say
that a set $A\subseteq \n$ is \emph{coded} in a model $M$ if $M$ has
an element $a$ such that $(a)_n=1$ if and only if $n\in A$. Here,
$(a)_x$ can refer to any of the reader's favorite methods of coding
with elements of a model of \pa, e.g., by defining $(a)_x$ as the
$x^{\text{th}}$ digit in the binary expansion of $a$. It is easy to
see that for \emph{nonstandard} models we can equivalently define
the standard system to be the collection of all subsets of the
natural numbers coded in the model.

What features characterize standard systems?  Without reference to
models of arithmetic, a standard system is just a particular
collection of subsets of the natural numbers.  Can we come up with a
list of elementary (set theoretic, computability theoretic, etc.)
properties that $\x\subseteq \power(\n)$ must satisfy in order to be
the standard system of some model of \pa? The notion of a
\emph{Scott set} encapsulates three key features of standard
systems.

\begin{definition}
$\x\subseteq \power(\n)$ is a \emph{Scott set} if
\begin{itemize}
\item [(1)] $\x$ is a Boolean algebra of sets.
\item [(2)] If $A\in\x$ and $B$ is Turing computable from
$A$, then $B\in\x$. \footnote{Conditions (1) and (2) together imply
that if $A_1,\ldots A_n\in\x$ and $B$ is computable from
$A_1\oplus\cdots \oplus A_n$, then $B\in\x$.}
\item [(3)] If $T$ is an infinite binary tree coded by a set in
$\x$, then $\x$ has a set coding some path through $T$.
\end{itemize}
\end{definition}

It is relatively easy to see that every standard system is a Scott
set (see \cite{kaye:modelsofPA}, p.\ 175). Conversely, Dana Scott
proved in 1962 that every \emph{countable} Scott set is the standard
system of a model of {\rm PA} \cite{scott:ssy}.  The proof relies on
the fact that Scott sets are powerful enough to carry out internally
the Henkin construction used to prove the Completeness Theorem.  A
crucial fact used in the proof is that if a set in the Scott set
codes a consistent theory (again using the reader's favorite
coding), then the Scott set must contain some completion of that
theory as well. This follows easily if one is familiar with the
relationship between building completions of a theory and branches
through binary trees (see \cite{kaye:modelsofPA} p. 177-182 for a
modern version of Scott's proof). So as the first step toward
characterizing standard systems, we have:
\begin{theorem}[Scott, 1962]\label{th:scott}
Every countable Scott set is the standard system of a model of \pa.
\end{theorem}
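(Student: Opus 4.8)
The plan is to carry out a Henkin construction \emph{inside} the Scott set, essentially Scott's original argument. Fix a countable Scott set $\x$. I first record the elementary closure facts I will lean on: since $\x$ is a nonempty Boolean algebra of sets it contains $\emptyset$, so by (2) every computable set lies in $\x$, and by (1) and (2) together $\x$ is closed under finite joins $\oplus$. Call a theory $T$, in the language $\lan$ of \pa\ augmented by finitely many new constants, a \emph{member of $\x$} if its set of axioms is coded by an element of $\x$. Then "$T\in\x$" survives (a) adding finitely many new axioms; (b) adding a Henkin axiom $\exists x\,\varphi(x)\to\varphi(c)$ for a fresh constant $c$; and (c) adding, for a fresh constant $d$ and a set $X\in\x$, the bit-coding scheme $\{(d)_m=\bar 1:m\in X\}\cup\{(d)_m=\bar 0:m\notin X\}$, which is computable from $X$. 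Each of these operations preserves consistency when applied to a consistent $T$ (for the coding scheme this is compactness together with the fact that \pa\ proves the existence of codes for arbitrary finite bit-patterns, used to interpret $d$ inside a model of $T$).

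The crucial lemma — and the only place where the tree property (3) does real work — is that $\x$ is closed under completions of its coded theories: if $T\in\x$ is consistent, then some complete consistent extension $T'\supseteq T$ in the same language lies in $\x$. To prove this I will form the binary tree $T^{*}$ whose level-$n$ nodes are those $s\in 2^{n}$ for which $T$, together with the partial truth assignment that $s$ gives to the first $n$ sentences, admits no refutation of length $\le n$. Searching for bounded-length proofs is computable from $T$, so $T^{*}$ is coded in $\x$ by (2); $T^{*}$ is infinite because any model of $T$ supplies, at each level, a node with no refutation whatsoever; and any branch $b$ through $T^{*}$ determines a complete consistent extension $T_{b}$ of $T$, since a refutation of $T_{b}$ would have some finite length and mention only finitely many sentences, contradicting membership in $T^{*}$ of a long enough initial segment of $b$. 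By (3), $\x$ contains such a branch $b$, and then $T_{b}\le_{T}b\oplus T$, so $T_{b}\in\x$.

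Now I enumerate $\x=\{X_{n}:n\in\n\}$ together with all formulas of the eventual language (which adds a fixed countable list of new constants to $\lan$), and recursively build an increasing chain of theories $\pa=\hat T_{-1}\subseteq\hat T_{0}\subseteq\hat T_{1}\subseteq\cdots$ in an increasing chain of languages $\lan=\lan_{-1}\subseteq\lan_{0}\subseteq\cdots$, each $\lan_{n}$ adding only finitely many constants, so that every $\hat T_{n}\in\x$ is consistent and complete for $\lan_{n}$-sentences. At stage $n$ I successively: (a) throw in a Henkin witness for the $n$-th formula; (b) introduce a fresh constant $d_{n}$ and the bit-coding scheme for $X_{n}$; (c) apply the completion lemma to obtain a complete consistent $\hat T_{n}\in\x$ in the resulting finite language $\lan_{n}$. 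Because an extension of a complete theory that stays consistent cannot revise a previously decided sentence, the completions cohere, so $T_{\infty}=\bigcup_{n}\hat T_{n}$ is a complete, consistent, Henkin theory extending \pa.

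Finally I will check that the Henkin term model $M\models\pa$ of $T_{\infty}$ satisfies $\ssy(M)=\x$. For $\x\subseteq\ssy(M)$: the interpretation of $d_{n}$ codes $X_{n}$, since $T_{\infty}$ proves $(d_{n})_{m}=\bar 1$ exactly for $m\in X_{n}$. For $\ssy(M)\subseteq\x$: every set coded in $M$ has the form $\{m:M\models(t)_{m}=\bar 1\}$ for a closed term $t$, and $t$ lives in some finite $\lan_{n}$, so the sentences $(t)_{m}=\bar 1$ are already decided by $\hat T_{n}$; hence the coded set equals $\{m:\hat T_{n}\vdash(t)_{m}=\bar 1\}$, which is computable from $\hat T_{n}\in\x$ and so belongs to $\x$ by (2). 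I expect the completion lemma to be the main obstacle: it is precisely where a Scott set's tree property is indispensable, and it is what allows the construction to sidestep having to decide consistency effectively. Note that countability of $\x$ is used essentially, to fit all requirements into $\omega$ stages — exactly the feature that later work (and the results of this paper) must work around.
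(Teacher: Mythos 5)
Your proof is correct and follows essentially the same route the paper indicates for Scott's theorem: the paper does not spell out a full proof but explicitly describes it as a Henkin construction carried out inside the Scott set, whose crux is that a Scott set containing a consistent coded theory contains a completion of it --- which is precisely your completion lemma via the tree of unrefuted partial truth assignments. The remaining bookkeeping (Henkin witnesses, the bit-coding constants $d_n$ to force $\x\subseteq\ssy(M)$, and reading coded sets off the complete $\hat T_n$ to get $\ssy(M)\subseteq\x$) is the standard argument the paper cites from Kaye, pp.~177--182.
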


Thus, countable Scott sets are exactly the countable standard
systems of models of \pa. Scott's theorem leads naturally to the
following folklore question:

\begin{scott}\label{con:Scott}
Is \emph{every} Scott set the standard system of a model of \pa?
\end{scott}

In 1982, Knight and Nadel settled the question for Scott sets of
size $\omega_1$.
\begin{theorem}[Knight and Nadel, 1982]\label{th:scott_omega_1}
Every Scott set of size $\omega_1$ is the standard system of a model
of \pa. \cite{knight:scott}
\end{theorem}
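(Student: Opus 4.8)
The plan is to realize $\x$ as $\ssy(M)$ where $M$ is the union of an elementary chain $\langle M_\alpha:\alpha<\omega_1\rangle$ of countable models of $\pa$, built so that $\ssy(M_\alpha)\subseteq\x$ at every stage while every member of $\x$ is eventually captured. Fix an enumeration $\x=\{A_\alpha:\alpha<\omega_1\}$. Two soft facts frame the recursion: first, there is a countable Scott set $\y\subseteq\x$, so by Scott's Theorem~\ref{th:scott} we may take $M_0\models\pa$ countable with $\ssy(M_0)=\y\subseteq\x$; second, along an elementary chain a subset of $\n$ is coded in the union exactly when it is coded in one of the links (a coding element lies in some $M_\alpha$, and $n\in A\leftrightarrow(a)_n=1$ is absolute between $M_\alpha$ and the union because $n$ ranges over the common standard part $\n$), so $\ssy\bigl(\bigcup_{\alpha<\lambda}M_\alpha\bigr)=\bigcup_{\alpha<\lambda}\ssy(M_\alpha)$ at a limit $\lambda$ and likewise $\ssy(M)=\bigcup_{\alpha<\omega_1}\ssy(M_\alpha)$. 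Hence at limits I simply take unions (which preserves $\ssy\subseteq\x$ for free), and the construction succeeds as soon as each successor step produces $M_{\alpha+1}\succeq M_\alpha$ with $A_\alpha\in\ssy(M_{\alpha+1})$ and $\ssy(M_{\alpha+1})\subseteq\x$: then $\ssy(M)=\x$.

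So everything reduces to one controlled-extension lemma: given a countable $M\models\pa$ with $\ssy(M)\subseteq\x$ and a set $A\in\x$, produce a countable $N\succeq M$ with $A\in\ssy(N)$ and $\ssy(N)\subseteq\x$. The candidate for $N$ is the prime model over $M\cup\{c\}$, where $c$ realizes a complete type $q(x)$ over $M$ extending the ``bit type'' $p_A(x)=\{(x)_n=1:n\in A\}\cup\{(x)_n\neq1:n\notin A\}$; since every finite piece of $p_A$ is realized in $M$ by a standard number, $p_A$ is finitely satisfiable in $M$ and such a $q$ exists, and any realization $c$ of $q$ has $\{n:N\models(c)_n=1\}=A$, so $A\in\ssy(N)$ while $N\succeq M$. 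For such an $N$ one computes $\ssy(N)=\{\,\{n\in\n:N\models(g(c))_n=1\}:g\text{ an }M\text{-definable function}\,\}$, a countable family wholly determined by $q$, so the problem is exactly to choose $q$ so that every set in this family lies in $\x$. I would build $q$ by a Henkin-style bookkeeping in $\omega$ rounds, one per $M$-definable function $g$: in the round for $g$, rather than committing arbitrarily to the bits of $g(c)$, one examines the (infinite) tree of finite bit-patterns still compatible with $p_A$ and with everything imposed in earlier rounds and picks a branch through it lying \emph{inside} $\x$ — which clause~(3) of the definition of a Scott set supplies — while closing the auxiliary reals used for bookkeeping under relative Turing computability, by clause~(2), keeps the scaffolding inside $\x$ as well.

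The heart of the matter, and what I expect to be the main obstacle, is making this last step genuinely deliver $\ssy(N)\subseteq\x$, and do so coherently across all $\omega_1$ stages. To invoke clause~(3) one must exhibit the relevant ``tree of compatible bit-patterns'' as a single set coded by a real of $\x$, yet deciding membership in that tree reduces to deciding truth in $M$ of formulas with (possibly nonstandard) parameters from $M$, which the bare hypothesis $\ssy(M)\subseteq\x$ does not make available. One is therefore pushed to strengthen the inductive hypothesis so that $M_\alpha$ is itself \emph{coded} by a real of $\x$, and to build alongside the chain a continuous increasing sequence $\langle\x_\alpha:\alpha<\omega_1\rangle$ of countable Scott subsets of $\x$ with union $\x$, arranging that the code of $M_\alpha$ together with the bookkeeping reals used at stage $\alpha$ are absorbed into $\x_{\alpha+1}$. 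The truly delicate point — and where the regularity of $\omega_1$ and the bound $|\x|=\omega_1$ are used — is the limit stages: a naive code of $M_\lambda=\bigcup_{\alpha<\lambda}M_\alpha$ is an $\omega$-indexed join of codes of the earlier links, and a Scott set need not be closed under such joins, so one must organize the two sequences in tandem — keeping, for instance, each $M_\alpha$ recursively saturated, so that $M_\lambda$ is determined up to isomorphism by its theory (constant along the chain) and by $\ssy(M_\lambda)=\bigcup_{\alpha<\lambda}\ssy(M_\alpha)$ — in order to recover at $\lambda$ the coded presentation needed to continue. Once the right invariants are in place, the successor step itself, the stability of $\ssy$ under unions, and the final verification are routine.
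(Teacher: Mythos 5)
Your outer architecture --- an elementary chain of countable models of length $\omega_1$, one set of $\x$ absorbed at each successor step, unions at limits --- is exactly the paper's, and your reduction to a single controlled-extension lemma is the right reduction: that lemma is precisely Ehrenfeucht's Lemma (Theorem~\ref{th:ehrenlemma}). The gap is in your proof of that lemma. Realizing a type $q\supseteq p_A$ over $M$ and controlling the standard system of the Skolem hull forces you, as you yourself notice, to decide consistency of formulas with nonstandard parameters from $M$, i.e., to have (enough of) the elementary diagram of $M$ available inside $\x$. That is a strictly stronger inductive hypothesis than $\ssy(M_\alpha)\subseteq\x$, and it does not survive limit stages: any elementary diagram of $M_\lambda$ uniformly computes every set in $\ssy(M_\lambda)=\bigcup_{\alpha<\lambda}\ssy(M_\alpha)$, so asking for it to lie in $\x$ is asking $\x$ to contain a single real dominating an arbitrary countable subfamily accumulated along the chain --- exactly the $\omega$-join closure that Scott sets lack and that you flag. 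Your proposed repair does not close this: recursive saturation does make $M_\lambda$ determined up to isomorphism by its theory and standard system (and unions of elementary chains of recursively saturated models are again recursively saturated), but an isomorphism type is not a code; nothing produces a presentation of $M_\lambda$ whose elementary diagram lies in $\x$, and nothing in the hypothesis $\ssy(M_\lambda)\subseteq\x$ guarantees that one exists.

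The paper's proof of Ehrenfeucht's Lemma avoids the elementary diagram entirely, which is exactly what lets the induction pass through limits with no extra invariant. Instead of growing $M$ upward by a new element, one fixes a countable Scott set $\y\subseteq\x$ containing $\ssy(M)\cup\{A\}$ and runs the Henkin construction inside $\y$ to produce a fresh model $M^*$ of $\pa$ plus the $\Sigma_1$-theory of $M$ with $\ssy(M^*)=\y$; the point is that the $\Sigma_1$-theory of $M$, unlike its elementary diagram, \emph{is} in $\ssy(M)$ via the $\Sigma_1$ truth predicate. Friedman's Embedding Theorem then gives $M\prec_{\Delta_0}M^*$ using only $\ssy(M)\subseteq\y$ and $M^*\models\Sigma_1\text{-Th}(M)$, and closing $M$ under initial segment inside $M^*$ and applying Gaifman's theorem upgrades this to a full elementary extension $N$ with $A\in\ssy(N)=\y\subseteq\x$. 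To salvage your type-realization route you would have to either prove the extension lemma from $\ssy(M)\subseteq\x$ alone or exhibit an invariant that genuinely propagates through limits; as written, the limit step is a real gap, not a routine verification.
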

It follows that Scott sets of size $\omega_1$ are exactly the
standard systems of size $\omega_1$ of models of \pa. I will give a
proof of Theorem \ref{th:scott_omega_1} shortly.
\begin{corollary}
If {\rm CH} holds, then Scott's Problem has a positive answer.
\end{corollary}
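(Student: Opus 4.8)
The plan is to deduce this immediately from the two theorems just stated, together with a trivial cardinality count. First I would observe that, by definition, a Scott set $\x$ is a subcollection of $\power(\n)$, so $|\x| \le 2^{\aleph_0}$. I would also note that $\x$ is necessarily infinite: being a nonempty Boolean algebra of subsets of $\n$, it contains $\emptyset$, and then clause (2) in the definition of a Scott set forces every Turing computable subset of $\n$ into $\x$, and there are infinitely many of those. Hence $\aleph_0 \le |\x| \le 2^{\aleph_0}$ for every Scott set $\x$.

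Next, assuming {\rm CH}, we have $2^{\aleph_0} = \omega_1$, so the displayed inequality collapses to the dichotomy $|\x| = \aleph_0$ or $|\x| = \aleph_1$. But these are precisely the hypotheses covered by Theorem \ref{th:scott} and Theorem \ref{th:scott_omega_1}: in the countable case Scott's theorem provides a model of \pa\ with standard system $\x$, and in the case $|\x| = \aleph_1$ the Knight--Nadel theorem does the same. Since every Scott set falls under one of these two cases when {\rm CH} holds, every Scott set is then the standard system of a model of \pa, which is exactly a positive answer to Scott's Problem.

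I do not anticipate any genuine obstacle: the corollary is essentially bookkeeping on top of results already in hand. The only step meriting even a moment's care is checking that a Scott set cannot be finite, so that under {\rm CH} the cardinality possibilities really are exhausted by the two cases that have been handled; everything else is immediate.
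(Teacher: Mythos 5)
Your argument is correct and is exactly the (implicit) reasoning behind the corollary in the paper: under {\rm CH} every Scott set, being an infinite subcollection of $\power(\n)$, has cardinality $\aleph_0$ or $\aleph_1$, and these cases are covered by Theorem \ref{th:scott} and Theorem \ref{th:scott_omega_1} respectively. The paper offers no separate proof, and your bookkeeping (including the check that Scott sets are infinite) matches what is intended.
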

Very little is known about Scott's Problem if {\rm CH} fails, that
is, for Scott sets of size larger than $\omega_1$. I will use the
techniques of forcing together with forcing axioms to find new
conditions under which given a Scott set we can build a model of
{\rm PA} with that Scott set as the standard system. Given a Scott
set $\x$, we obtain a partial order $\x/\fin$ consisting of the
infinite sets in $\x$ under the ordering of \emph{almost inclusion}.
Let us call a Scott set $\x$ \emph{proper} if it is
\emph{arithmetically closed} and the poset $\x/\fin$ is proper (see
Section \ref{sec:settheory} for the definition of properness and
{\rm PFA} and Section \ref{sec:proof} for the significance of
arithmetic closure). My main theorem is:

\begin{main} Assuming \pfa, every
proper Scott set is the standard system of a model of \pa.
\end{main}
I will prove the main theorem by first generalizing a theorem known
as Ehrenfeucht's Lemma to uncountable models using \pfa.
Ehrenfeucht's Lemma is an unpublished result of Ehrenfeucht from
around the 1970's \footnote{Roman Kossak, personal communication.}.
One can use Ehrenfeucht's Lemma to give a proof of Theorem
\ref{th:scott_omega_1}. Similarly, my generalized version is used to
prove the main theorem. It is instructive to see a proof of
Ehrenfeucht's Lemma to understand the difficulties involved in
extending it to uncountable models (the proof below follows
\cite{smorynski:ssy}).
\begin{theorem}[Ehrenfeucht's Lemma]\label{th:ehrenlemma}
If $M$ is a countable model of {\rm PA} whose standard system is
contained in a Scott set $\x$, then for any $A\in\x$ there is an
elementary extension $M\prec N$ such that $A\in {\rm
SSy}(N)\subseteq \x$.
\end{theorem}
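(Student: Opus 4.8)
The plan is to obtain $N$ by adjoining to $M$ a single new element $c$ that codes $A$ on its standard coordinates, arranged so that every set coded by the resulting model still lies in $\x$. Because $\pa$ has definable Skolem functions, it is enough to build a complete $1$-type $p(x)$ over $M$: realizing $p$ by an element $c$ of some $N_0\succ M$ and letting $N$ be the Skolem hull of $M\cup\{c\}$ inside $N_0$ produces a countable $N$ with $M\prec N$ in which every element has the form $t(c)$ for a term $t$ with parameters from $M$. Fix enumerations $(t_k)_{k\in\omega}$ of all such terms, with $t_0(x)=x$, and $(\psi_k)_{k\in\omega}$ of all $\lan(M)$-formulas in the free variable $x$. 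I will construct $p$ as an increasing union $\bigcup_k p_k$ of sets of formulas over $M$, maintaining throughout the invariant that every finite subset of $p_k$ is satisfiable in $M$ (equivalently, consistent with the elementary diagram of $M$). Into $p_0$ I put, for each $n\in A$, the formula $(x)_n=1$, and for each $n\notin A$, the formula $(x)_n=0$; the invariant holds of $p_0$ because $M$ contains elements realizing any prescribed finite pattern of binary digits.

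The construction alternates two kinds of stages. At a completion stage for $\psi_k$ one observes that if $p_k$ is finitely satisfiable in $M$ then so is one of $p_k\cup\{\psi_k(x)\}$, $p_k\cup\{\neg\psi_k(x)\}$, and one adjoins the successful alternative, calling it $\rho_k$. The substantive stages handle the terms: at such a stage I must guarantee that the set $D_k=\{n\in\n: (t_k(x))_n=1\text{ lies in }p\}$ eventually coded by $t_k(c)$ belongs to $\x$, and this is where axiom (3) of Scott sets is used. Suppose I have so far committed to finitely many formulas $\rho_1,\dots,\rho_{k-1}$ and to coding sets $A=D_0,D_1,\dots,D_{k-1}\in\x$. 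Let $R_k\subseteq\n$ code which finite bit-patterns for the tuple $(x,t_1(x),\dots,t_k(x))$ are jointly satisfiable in $M$ together with $\rho_1\wedge\cdots\wedge\rho_{k-1}$. By the very description of the standard system recalled in the introduction, $R_k$ --- being of the form $\{w\in\n: M\models\theta(w,\bar a)\}$ for a single $\lan(M)$-formula $\theta$ and parameters $\bar a$ from $M$ --- lies in $\ssy(M)\subseteq\x$; crucially this uses only this definable fragment of the theory of $M$. Now let $T_k$ be the set of $\sigma\in 2^{<\omega}$ such that the finite pattern asserting ``$t_i(x)$ agrees with $D_i$ for all $i<k$, and $t_k(x)$ agrees with $\sigma$, on the first $|\sigma|$ coordinates'' has its code in $R_k$. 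Then $T_k$ is a subtree of $2^{<\omega}$ that is Turing-computable from $R_k\oplus A\oplus D_1\oplus\cdots\oplus D_{k-1}$, hence $T_k\in\x$; and $T_k$ is infinite, since the invariant on $p_k$ provides, for every $m$, an element $e\in M$ meeting all the committed constraints through $m$, and the string of the first $m$ binary digits of $t_k(e)$ is then a node of $T_k$. By axiom (3), $\x$ contains a branch $D_k$ of $T_k$, and I set $p_{k+1}=p_k\cup\{(t_k(x))_n=D_k(n): n\in\n\}$. The defining property of $T_k$ is precisely what keeps the invariant true along the branch $D_k$.

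After all stages, $p=\bigcup_k p_k$ is complete (every $\psi_k$ was decided) and finitely satisfiable in $M$, hence consistent with the elementary diagram of $M$; realizing $p$ by $c$ in $N_0\succ M$ and taking $N$ to be the Skolem hull of $M\cup\{c\}$ gives $M\prec N$ with $N$ countable, with $A=\{n\in\n: N\models (c)_n=1\}\in\ssy(N)$ by the coding formulas in $p_0$, and with the set coded by each $t_k(c)$ equal to $D_k\in\x$; since these exhaust the sets coded in $N$, we obtain $\ssy(N)\subseteq\x$. The genuine obstacle is the construction of $T_k$: the naive tree of finite partial types consistent with the full elementary diagram of $M$ is not coded in $\x$ --- it would compute $\mathrm{Th}(M)$, which in general escapes $\x$ --- so one must instead thin it to the tree built from the $\ssy(M)$-set $R_k$ together with the finitely many coding sets already in hand, and then check that the thinner tree is still infinite and that every one of its branches is consistent with all prior commitments. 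The interleaving of completion stages, term stages, and the enumeration of terms is routine bookkeeping.
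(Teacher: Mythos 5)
Your proof is correct, but it takes a genuinely different route from the one in the paper. The paper's argument is assembled from named machinery: it first uses the refined form of Scott's theorem (the internal Henkin construction) to manufacture a model $M^*$ of $\pa$ plus the $\Sigma_1$-theory of $M$ whose standard system is \emph{exactly} a countable Scott set $\y\subseteq\x$ containing $\ssy(M)$ and $A$; it then invokes Friedman's Embedding Theorem to obtain $M\prec_{\Delta_0}M^*$, and Gaifman's theorem on cofinal $\Delta_0$-elementary extensions to upgrade the closure of $M$ under initial segments in $M^*$ to a full elementary extension $N$ with $\ssy(N)=\y$. You instead build $N$ directly as the Skolem hull of $M$ together with a single realization $c$ of a carefully constructed complete type, steering the set coded by each term $t_k(c)$ one at a time through a branch, supplied by axiom (3), of an infinite binary tree lying in $\x$. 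Your key observation --- that the satisfiability-pattern set $R_k$ is the trace on $\n$ of a single parametrically definable subset of $M$, hence lies in $\ssy(M)\subseteq\x$, whereas the full elementary diagram is unavailable --- is exactly the right move, and your verifications that the trees are infinite and that every branch preserves finite satisfiability go through. What your approach buys is self-containedness (no appeal to Friedman or Gaifman) and a structure that directly foreshadows the paper's main construction: your term stages are the countable analogue of the dense sets $\mathscr D_f$ in the ultrapower proof of the $\omega_2$-Ehrenfeucht Principle, with branches through trees playing the role of the deciding conditions there. What the paper's approach buys is brevity and the stronger conclusion that $\ssy(N)$ can be made \emph{equal} to any prescribed countable Scott set $\y$ with $\ssy(M)\cup\{A\}\subseteq\y\subseteq\x$, rather than merely contained in $\x$. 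Both proofs lean on the countability of $M$ at the same pressure point: yours through the enumeration of terms and formulas, the paper's through Friedman's theorem.
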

\begin{proof} First, we consider nonstandard $M$. Let $\x$ be a Scott
set such that $\ssy(M)\subseteq\x$ and let $A\in\x$. Choose a
countable Scott set $\y\subseteq \x$ containing $\ssy(M)$ and $A$.
Using the truth predicate for $\Sigma_n$-formulas, we can prove that
the $\Sigma_n$-theory of $M$ is coded by a set in $\ssy(M)$ for
every $n$. Moreover, all computable theories are in $\ssy(M)$ since
$\ssy(M)$ is a Scott set. Therefore, the theory
$T$:=``$\pa+\Sigma_1$-theory of $M$" is in $\ssy(M)$. The idea that
the Henkin construction can be carried out inside a Scott gives more
than just Theorem \ref{th:scott}. From this it follows that for any
theory $T\supseteq \pa$ such that $T\cap \Sigma_n\in \x$ for every
$n\in\n$, there exists a model of $T$ with that Scott set as the
standard system. In particular, we get a model $M^*$ of $T$ with
$\ssy(M^*)=\y$.  By Friedman's Embedding Theorem (see
\cite{kaye:modelsofPA}, p.\ 161), since
$M^*\models``\Sigma_1$-theory of $M$" and $\ssy(M)\subseteq \y$, we
have $M\prec_{\Delta_0} M^*$. Close $M$ under initial segment and
call the resulting submodel $N$. Then $M\prec N$ since it is cofinal
and $\Delta_0$-elementary (by Gaifman's Embedding Theorem, see
\cite{kaye:modelsofPA}, p.\ 87). But also $\ssy(N)=\ssy(M^*)=\y$ as
required since $N$ is an initial segment of $M^*$. This completes
the proof for nonstandard models. Let ${\rm
TA}:=\{\varphi\mid\n\models\varphi\}$ denote \emph{True Arithmetic}.
It is clear that $\n\prec N$ if and only if $N\models{\rm TA}$. The
standard system of $\n$ is the collection of all arithmetic sets. So
suppose that $\x$ is a Scott set containing all arithmetic sets and
fix $A\in\x$. It follows that ${\rm TA}\cap \Sigma_n$ is in $\x$ for
every $n\in\n$. Let $\y\subseteq\x$ be a countable Scott set
containing $A$ and ${\rm TA}\cap \Sigma_n$ for every $n\in\n$. By
the remark above, there exists a model $N\models{\rm TA}$ whose
standard system is exactly $\y$. Thus, $\n\prec N$ and $A\in$ ${\rm
SSy}(N)\subseteq\x$.
\end{proof}
We are now ready to prove Knight and Nadel's result. \footnote{This
is not Knight and Nadel's original proof.}
\begin{proof}
[Proof of Theorem \ref{th:scott_omega_1}] Let $\x$ be a Scott set of
size $\omega_1$ and enumerate $\x=\{A_\xi\mid\xi<\omega_1\}$. The
idea is to build up a model with the Scott set $\x$ as the standard
system in $\omega_1$ steps by successively throwing in one more set
at each step and using Ehrenfeucht's Lemma to stay within $\x$. More
precisely, we will define an elementary chain $M_0\prec
M_1\prec\cdots \prec M_\xi\prec\cdots$ of length $\omega_1$ of
countable models of {\rm PA} such that ${\rm SSy}(M_\xi)\subseteq\x$
and $A_\xi\in$ ${\rm SSy}(M_{\xi+1})$. Then clearly
$M=\cup_{\xi<\omega_1}M_\xi$ will work. Let $M_0$ be any countable
model of {\rm PA} with ${\rm SSy}(M_0)\subseteq \x$. Such $M_0$
exists by Scott's theorem (\ref{th:scott}). Given $M_\xi$, by
Ehrenfeucht's Lemma, there exists $M_{\xi+1}$ such that $M_\xi\prec
M_{\xi+1}$, the set \hbox{$A_\xi\in{\rm SSy}(M_{\xi+1})$}, and ${\rm
SSy}(M_{\xi+1})\subseteq \x$. At limit stages take unions.
\end{proof}

The key ideas in the proof of Theorem \ref{th:scott_omega_1} can be
summarized in the following definition and theorem:
\begin{definition}[The $\kappa$-Ehrenfeucht Principle for $\Gamma$]\label{def:ehren}
Let $\kappa$ be a cardinal and $\Gamma$ some collection of Scott
sets.  The \emph{$\kappa$-Ehrenfeucht Principle for $\Gamma$} states
that if $M$ is a model of {\rm PA} of size less than $\kappa$ and
$\x$ is a Scott set in $\Gamma$ such that $\ssy(M)\subseteq \x$,
then for any $A\in\x$ there is an elementary extension $M\prec N$
such that $A\in \ssy(N)\subseteq \x$.  If $\Gamma$ is the collection
of all Scott sets, we will say simply that the $\kappa$-Ehrenfeucht
Principle holds.
\end{definition}
In view of Definition \ref{def:ehren}, Ehrenfeucht's Lemma (Theorem
\ref{th:ehrenlemma}) is the\break $\omega_1$-Ehrenfeucht Principle.
We can freely assume that the elementary extension $N$ given by the
$\kappa$-Ehrenfeucht Principle has size less than $\kappa$ since if
this is not the case, we can always take an elementary submodel $N'$
of $N$ such that $M\prec N'$ and $A\in \ssy(N')$. A completely
straightforward generalization of the proof of Theorem
\ref{th:scott_omega_1} gives:
\begin{theorem}\label{th:ehrenprin}
If the $\kappa$-Ehrenfeucht Principle for $\Gamma$ holds, then every
Scott set in $\Gamma$ of size $\kappa$ is the standard system of a
model of \pa.
\end{theorem}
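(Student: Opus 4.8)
The plan is to run the transfinite construction from the proof of Theorem~\ref{th:scott_omega_1} verbatim, with $\omega_1$ replaced by $\kappa$ and Ehrenfeucht's Lemma replaced by the hypothesized $\kappa$-Ehrenfeucht Principle for $\Gamma$. Fix a Scott set $\x \in \Gamma$ with $|\x| = \kappa$ and enumerate $\x = \{A_\xi \mid \xi < \kappa\}$. We may assume $\kappa$ is uncountable: no Scott set is finite (a Scott set contains $\emptyset$, hence every computable set, by condition~(2)), and for $\kappa = \omega$ the conclusion is exactly Scott's Theorem~\ref{th:scott}. The goal is to build an elementary chain $\langle M_\xi \mid \xi < \kappa\rangle$ of models of \pa\ so that, for every $\xi < \kappa$: (i) $|M_\xi| \le |\xi| + \aleph_0$; (ii) $\ssy(M_\xi) \subseteq \x$; and (iii) $A_\xi \in \ssy(M_{\xi+1})$. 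Granting this, set $M = \bigcup_{\xi<\kappa} M_\xi$. Then $M \models \pa$ as the union of an elementary chain, and since a member of $\ssy(M)$ is precisely the trace on $\n$ of some parameter-definable subset of $M$, and every such parameter already lies in some $M_\xi$, we get $\ssy(M) = \bigcup_{\xi<\kappa}\ssy(M_\xi)$. By (ii) this is contained in $\x$, and by (iii) it contains every $A_\xi$; hence $\ssy(M) = \x$, as required.

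For the base step, first note that every Scott set contains a countable Scott set: starting from one of its members, close off in $\omega$ stages under Turing reducibility, finite joins, and the operation ``for each infinite binary tree coded by a set already present, throw in one path through it'' (available because $\x$ is itself a Scott set); only countably many sets enter at each finite stage, so the union is a countable Scott set $\y_0 \subseteq \x$. By Scott's Theorem~\ref{th:scott}, fix a countable $M_0 \models \pa$ with $\ssy(M_0) = \y_0$; then (i) and (ii) hold at $0$.

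At a successor stage, $M_\xi$ has size $<\kappa$ by (i) and $\ssy(M_\xi)\subseteq\x$, so — using that $\x\in\Gamma$ — the $\kappa$-Ehrenfeucht Principle for $\Gamma$ applies to $A_\xi\in\x$ and produces $M_\xi\prec N$ with $A_\xi\in\ssy(N)\subseteq\x$. The downward Löwenheim--Skolem argument underlying the remark following Definition~\ref{def:ehren} lets us pass to an intermediate elementary submodel that still contains $M_\xi$ and a code for $A_\xi$ and has size $\le|M_\xi|+\aleph_0\le|\xi+1|+\aleph_0$; let $M_{\xi+1}$ be that model. At a limit $\lambda<\kappa$ put $M_\lambda=\bigcup_{\xi<\lambda}M_\xi$: it is a model of \pa\ extending each $M_\xi$ elementarily, $\ssy(M_\lambda)=\bigcup_{\xi<\lambda}\ssy(M_\xi)\subseteq\x$, and $|M_\lambda|\le|\lambda|\cdot\sup_{\xi<\lambda}(|\xi|+\aleph_0)=|\lambda|+\aleph_0<\kappa$, so (i) and (ii) persist. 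This completes the recursion.

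Essentially everything above is lifted directly from the proof of Theorem~\ref{th:scott_omega_1}; the only point that requires a moment's care is the size bookkeeping in clause~(i), whose purpose is to keep each $M_\xi$ of size $<\kappa$ so that the $\kappa$-Ehrenfeucht Principle remains applicable at the next step. When $\kappa$ is regular this is automatic — $|\lambda|\cdot\sup_{\xi<\lambda}|M_\xi|<\kappa$ whenever $\lambda<\kappa$ and each $|M_\xi|<\kappa$ — and clause~(i) is included only so that the singular case goes through with no change. I therefore expect no genuine obstacle here: all of the real content is packed into the $\kappa$-Ehrenfeucht Principle itself, which is the hypothesis of the theorem.
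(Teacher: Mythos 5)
Your proposal is correct and is exactly the argument the paper intends: the paper offers no separate proof of Theorem~\ref{th:ehrenprin}, describing it only as a completely straightforward generalization of the proof of Theorem~\ref{th:scott_omega_1}, and your transfinite elementary chain --- with the L\"owenheim--Skolem reduction licensed by the remark following Definition~\ref{def:ehren} and the cardinality bookkeeping keeping each $M_\xi$ of size less than $\kappa$ --- is precisely that generalization, spelled out. No gaps.
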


Thus, one approach to solving Scott's Problem would be to try to
prove the $\omega_2$-Ehrenfeucht Principle for some collection of
Scott sets. However, proofs of Ehrenfeucht's Lemma hinge precisely
on those techniques in the field of models of {\rm PA} that appear
to work only with countable models. As an example, Friedman's famous
Embedding Theorem does not generalize to uncountable
models.\footnote{Here $\omega_1$-like models are an obvious
counterexample.} In what follows, I will mainly investigate the
Ehrenfeucht principles. The results on Scott's Problem will follow
as a corollary. Under {\rm PFA}, I will show that the
\emph{$\omega_2$-Ehrenfeucht Principle for proper Scott sets} holds
(Theorem \ref{th:ehren}).

\section {Set Theory and Scott's
Problem}\label{sec:settheory} Since the result of Knight and Nadel
(Theorem \ref{th:scott_omega_1}), very little progress had been made
on Scott's Problem until some recent work of Fredrik Engstr\"om
\cite{engstrom:thesis}. It is not difficult to believe that Scott's
Problem past $\omega_1$ might have a set theoretic resolution.
Engstr\"om followed a strategy, suggested more than a decade earlier
by Joel Hamkins and others, to use forcing axioms to gain new
insight into the problem. We saw that a positive answer to Scott's
Problem follows from {\rm CH}. It is a standard practice in set
theoretic proofs that if a statement follows from {\rm CH}, we try
to prove it or its negation from $\neg{\rm CH}$ + Martin's Axiom.
Martin's Axiom ({\rm MA}) is a forcing axiom which asserts that for
every c.c.c.\ poset $\p$ and every collection $\mathcal{D}$ of less
than the continuum many dense subsets of $\p$, there is a filter on
$\p$ that meets all of them. Such filters are often called
\emph{partially generic filters}. Engstr\"om tried to use Martin's
Axiom to find new techniques for building models of {\rm PA} whose
standard system is a given Scott set.

Given a Scott set $\x$, Engstr\"om chose the poset $\x/\fin$, whose
elements are infinite sets in $\x$ ordered by \emph{almost
inclusion}. That is, for infinite $A$ and $B$ in $\x$, we say that
$A\leq B$ if and only if $A\subseteq_\fin B$. Observe that $\x/\fin$
is forcing equivalent to forcing with the Boolean algebra $\x$
modulo the ideal of finite sets. A familiar and thoroughly studied
instance of this poset is $\power(\n)/\fin$. A Scott set is
\emph{arithmetically closed} if whenever $A$ is in it and $B$ is
arithmetically definable from $A$, then $B$ is also in it (for a
more extensive discussion, see Section \ref{sec:proof}). For a
property of posets $\mathscr P$, if $\x$ is an arithmetically closed
Scott set and $\x/\fin$ has $\mathscr P$, I will simply say that
\emph{$\x$ has property $\mathscr P$}. An important point to be
noted here is that whenever a Scott set $\x$ is discussed as a
poset, I will always be assuming that \emph{it is arithmetically
closed}. The significance of arithmetic closure will become apparent
in Section \ref{sec:proof}.
\begin{theorem} [Engstr\"om, 2004]\label{th:engs}
Assuming Martin's Axiom, every c.c.c.\ Scott set of size less than
the continuum is the standard system of a model of {\rm PA}.
\cite{engstrom:thesis}
\end{theorem}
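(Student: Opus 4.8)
By Theorem~\ref{th:ehrenprin} it suffices to establish the $\kappa$-Ehrenfeucht Principle for the class of c.c.c.\ Scott sets for every infinite $\kappa$ below $\mathfrak c$, the continuum: the elementary-chain construction of Theorem~\ref{th:scott_omega_1} then produces, in $\kappa$ steps (taking unions at limits, which preserves ``$\ssy\subseteq\x$''), a model of {\rm PA} whose standard system is a prescribed c.c.c.\ Scott set $\x$ of size $\kappa$. So fix such a $\x$ (arithmetically closed, with $\x/\fin$ c.c.c.), a model $M\models\pa$ with $\ssy(M)\subseteq\x$ and $|M|<\kappa\le\mathfrak c$, and a set $A\in\x$ not already in $\ssy(M)$; the task is to find $M\prec N$ with $A\in\ssy(N)\subseteq\x$, after which an elementary submodel of $N$ of size $<\kappa$ containing $M$ and a code for $A$ completes the step.

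The plan is to carry out the ``Henkin construction inside a Scott set'' underlying Theorems~\ref{th:scott} and~\ref{th:ehrenlemma}, but, since $\x$ need no longer be countable, to drive it with Martin's Axiom rather than with a recursion of length $\omega$. I would force with $\x/\fin$ itself --- the poset {\rm MA} is tailored to, and c.c.c.\ by hypothesis. Meeting, for each $B\in\x$, the dense set of conditions deciding membership of $B$ produces a filter $G$ generating a nonprincipal ultrafilter $U$ on the Boolean algebra $\x$, and one forms the reduced power $N$ of $M$ modulo $U$, taking as index functions those $\n\to M$ that are coded by a set in $\x$ over finitely many parameters of $M$. Here \emph{arithmetic closure} enters: the Skolem index functions needed for a {\L}o{\'s}-style theorem --- typically $k\mapsto(\mu x)\,\varphi(x,\bar f(k))$ for arithmetic $\varphi$ and $\x$-coded $\bar f$ --- are again $\x$-coded, so {\L}o{\'s}'s Theorem goes through and $M\prec N$; arranging for the digit sequence of one index function to be $A$ puts $A$ into $\ssy(N)$. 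It remains to force $\ssy(N)\subseteq\x$: a set coded in $N$ has the form $\{n:S_n\in U\}$ for an $\x$-coded sequence $(S_n)_n$ of members of $\x$, and $G$ must be chosen --- meeting fewer than $\mathfrak c$ dense sets, which is legitimate since $|\x|<\mathfrak c$ --- so that every such ``$U$-trace of an $\x$-sequence'' is itself in $\x$.

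This last requirement is the crux and the step I expect to be hardest. It cannot be met by the naive move of shrinking a condition below all the $S_n$ in the intended trace, since their intersection is typically finite; instead the class of index functions, the ultrafilter, and the dense families must be engineered in tandem --- one restricts to a subfamily of $\x$-coded functions still closed enough for {\L}o{\'s} but with controlled traces, and forces $U$ to be homogeneous enough (selective, or Ramsey-like, over $\x/\fin$) that each trace is recovered, via arithmetic closure, from finitely much data already in $\x$. One must then verify that the master poset assembled from $\x/\fin$ and this bookkeeping is still c.c.c., ideally by reducing its antichains to antichains of $\x/\fin$, and that the chosen families are genuinely dense. This is precisely where both hypotheses on $\x$ --- that $\x/\fin$ be c.c.c.\ and that $\x$ be arithmetically closed --- get consumed. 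Granting it, $N$ is the required extension, and Theorem~\ref{th:ehrenprin} yields the theorem.
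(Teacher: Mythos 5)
The paper does not actually prove this theorem --- it cites Engstr\"om --- but all the machinery needed to reconstruct a proof is in Section~\ref{sec:proof}, and measured against that machinery your proposal has the right architecture (reduce via Theorem~\ref{th:ehrenprin} to an Ehrenfeucht-type step; realize that step as an ultrapower of $M$ by a partially generic ultrafilter on $\x/\fin$, obtained from {\rm MA} by meeting fewer than $\mathfrak{c}$ dense sets) but stops exactly at the load-bearing point. You correctly isolate the crux --- arranging that every ``$U$-trace'' $\{n : S_n\in U\}$ of an $\x$-coded sequence lands back in $\x$ --- and then you (a)~do not prove it and (b)~conjecture that it requires re-engineering the forcing: restricting the index functions, demanding a Ramsey-like ultrafilter, and building a ``master poset'' whose c.c.c.-ness would then have to be re-verified. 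None of that is needed, and the claim that the naive approach ``cannot be met'' is where you go wrong. The correct move (Lemma~\ref{th:decisive}, (1)$\Rightarrow$(2)) is not to shrink below all the $S_n$ --- you are right that their intersection is typically finite --- but to shrink below, for each $n$, \emph{either $S_n$ or its complement}: given a condition $B$ and the coded sequence $\la S_n\mid n\in\n\ra$, form the binary tree of Boolean combinations $B_s$ that remain infinite; this tree is arithmetic in the data, hence in $\x$ by arithmetic closure; the Scott set property hands you a branch $P\in\x$; and a pseudo-intersection $C$ taken along $P$ is again arithmetic in $P$ and the sequence, hence in $\x$. For any ultrafilter $U\ni C$ one then has $\{n : S_n\in U\}=\{n : C\subseteq_\fin S_n\}$, which is in $\x$ by arithmetic closure. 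Thus the set of such deciding conditions $C$ is dense in $\x/\fin$ \emph{itself}, the poset is untouched, its c.c.c.-ness is the hypothesis, and {\rm MA} applies directly to the $|\x|<\mathfrak{c}$ many dense sets $\mathscr D_f$ (plus the single condition from Lemma~\ref{l:add} that puts $A$ into the standard system). This is precisely how Theorem~\ref{th:ehren} is proved with {\rm PFA} in place of {\rm MA}.

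Two smaller points. First, your index functions should simply be the functions coded in $M$ (i.e.\ $f(n)=(a)_n$ for some $a\in M$); then the \L o\'{s} sets $\{n: M\models\varphi(f(n))\}$ lie in $\ssy(M)\subseteq\x$ automatically and no arithmetic closure is consumed by \L o\'{s}'s Lemma --- closure is spent only on decisiveness. Second, be aware that by Theorem~\ref{th:wrong} every c.c.c.\ Scott set is countable, so the theorem you are proving, while true, yields nothing beyond Scott's original Theorem~\ref{th:scott}; this is the paper's stated reason for replacing {\rm MA} with {\rm PFA}.
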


To obtain models for Scott sets for which we could not do so before,
Engstr\"om needed that there are uncountable Scott sets that are
c.c.c.. Unfortunately:
\begin{theorem}\label{th:wrong} Every c.c.c.\ Scott set is countable.
\end{theorem}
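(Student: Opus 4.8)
The plan is to prove the contrapositive: if $\x$ is an uncountable Scott set, then the poset $\x/\fin$ is not c.c.c. First I would translate the statement into pure combinatorics. Note that two infinite sets $A,B\in\x$ are incompatible in $\x/\fin$ exactly when $A\cap B$ is finite: if $A\cap B$ is infinite, then $A\cap B\in\x$ because $\x$ is a Boolean algebra, and $A\cap B$ almost‑includes (indeed includes) into both $A$ and $B$; while if $A\cap B$ is finite, any common lower bound $C$ would have $C\subseteq_\fin A$ and $C\subseteq_\fin B$, forcing $C=(C\cap A\cap B)\cup(C\setminus A)\cup(C\setminus B)$ to be a union of three finite sets, hence finite, whereas conditions in $\x/\fin$ are infinite. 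Consequently an antichain in $\x/\fin$ is precisely an almost disjoint family of infinite sets, all of whose members lie in $\x$, and so it suffices to exhibit inside any uncountable $\x$ an uncountable almost disjoint family.

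Next I would produce such a family by the standard ``branches through the binary tree'' coding, taking care that it stays inside $\x$. Fix once and for all a computable bijection between $\n$ and $2^{<\omega}$, the tree of finite binary sequences. Identifying each member of $\x\subseteq\power(\n)$ with its characteristic function, i.e.\ with an element of $2^{\omega}$, assign to $r\in\x$ its branch $b_r=\{\,r\restriction n\mid n\in\omega\,\}\subseteq 2^{<\omega}$, regarded via the fixed bijection as a subset of $\n$. The map $r\mapsto b_r$ is uniformly Turing computable, so clause (2) of the definition of a Scott set gives $b_r\in\x$ for every $r\in\x$. Moreover $r\mapsto b_r$ is injective, and for $r\neq s$ the set $b_r\cap b_s$ is just the finite set of common initial segments of $r$ and $s$; hence $\{\,b_r\mid r\in\x\,\}$ is an almost disjoint family of infinite sets, all lying in $\x$, of cardinality $|\x|$.

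Finally I would assemble the pieces: if $\x$ is uncountable then $\{\,b_r\mid r\in\x\,\}$ is an uncountable antichain in $\x/\fin$, so $\x/\fin$ fails c.c.c.; equivalently, if $\x/\fin$ is c.c.c.\ then $\x$ is countable. I do not expect a genuine obstacle — the argument is short — but the one point that must not be skipped is verifying that the almost disjoint family we build actually sits inside $\x$, which is exactly what closure under Turing reducibility provides (and is why the hypothesis that $\x$ is a Scott set, rather than merely an uncountable Boolean subalgebra of $\power(\n)$, is needed). Note that only clauses (1) and (2) of the definition are used; neither the tree property (3) nor arithmetic closure plays any role.
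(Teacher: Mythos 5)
Your proposal is correct and is essentially the paper's own argument: the paper assigns to each $A\in\x$ the set $A'=\{\ulcorner A\cap n\urcorner\mid n\in\n\}$ of codes of its finite initial segments, which is exactly your branch $b_r$ up to the choice of coding, and concludes that $\{A'\mid A\in\x\}$ is an antichain of size $|\x|$. Your added verification that incompatibility in $\x/\fin$ coincides with almost disjointness (using that $\x$ is a Boolean algebra) is a detail the paper leaves implicit, but the route is the same.
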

\begin{proof}
Let $\x$ be a Scott set.  If $x$ is a finite subset of $\n$, let
$\ulcorner x \urcorner$ denote the code of $x$ using G\"{o}del's
coding. For every $A\in\x$, define an associated $A'=\{\ulcorner
A\cap n\urcorner\mid n\in\n\}$. Clearly $A'$ is computable from $A$,
and hence in $\x$. Observe that if $A\neq B$, then $|A'\cap
B'|<\omega$. Hence if $A\neq B$, we get that $A'$ and $B'$ are
incompatible in $\x/\fin$. It follows that $\mathscr A=\{A'\mid
A\in\x\}$ is an antichain of $\x/\fin$ of size $|\x|$.  This shows
that $\x/\fin$ always has antichains as large as the whole poset.
\end{proof}

Thus, the poset $\x/\fin$ has the worst possible chain condition,
namely $|\x|^+$-c.c.. Theorem \ref{th:wrong} implies that no new
instances of Scott's Problem can be obtained from Theorem
\ref{th:engs}.

I will borrow from Engstr\"om's work the poset $\x/\fin$. But my
strategy will be different in two respects. First, instead of {\rm
MA}, I will use the poset together with the forcing axiom {\rm PFA},
allowing me to get around the obstacle of Theorem \ref{th:wrong}. In
Section \ref{sec:proper}, I will argue that, unlike the case with
c.c.c.\ Scott sets, uncountable proper Scott sets do exist. However,
I will not be able to explicitly obtain any new instances of Scott's
Problem. Second, my main aim will be to obtain an extension of
Ehrenfeucht's Lemma to uncountable models, while Engstr\"om's was to
directly get a model whose standard system is a given Scott set.
This approach will allow me to handle Scott sets of size continuum,
which had not been possible with Engstr\"om's techniques.

Recall that for a cardinal $\lambda$, the set $H_\lambda$ is the
collection of all sets whose transitive closure has size less than
$\lambda$. Let $\p$ be a poset and $\lambda$ be a cardinal greater
than $2^{|\p|}$. Since we can always take an isomorphic copy of $\p$
on the cardinal $|\p|$, we can assume without loss of generality
that $\p$ and $\power(\p)$ are elements of $H_\lambda$. In
particular, we want to ensure that all dense subsets of $\p$ are in
$H_\lambda$. Let $M$ be a countable elementary submodel of
$H_\lambda$ containing $\p$ as an element. If $G$ is a filter on
$\p$, we say that $G$ is $M$-\emph{generic} if for every maximal
antichain $A\in M$ of $\p$, the intersection $G\cap A\cap
M\neq\emptyset$. It must be explicitly specified what $M$-generic
means in this context since the usual notion of generic filters
makes sense only for transitive structures and $M$ is not
necessarily transitive. This definition of $M$-generic is closely
related to the definition for transitive structures. To see this,
let $M^*$ be the Mostowski collapse of $M$ and $\p^*$ be the image
of $\p$ under the collapse. Let $G^*\subseteq \p^*$ be the pointwise
image of $G\cap M$ under the collapse. Then $G$ is $M$-generic if
and only if $G^*$ is $M^*$-generic for $\p^*$ in the usual sense.

\begin{definition} Let $\p\in H_\lambda$ be a poset and $M$ be an
elementary submodel of $H_\lambda$ containing $\p$. Then a condition
$q\in\p$ is $M$-\emph{generic} if and only if every $V$-generic
filter $G\subseteq\p$ containing $q$ is $M$-generic.
\end{definition}
\begin{definition} A poset $\p$ is \emph{proper} if for every
$\lambda> 2^{|\p|}$ and every countable $M\prec H_\lambda$
containing $\p$, for every $p\in\p\cap M$, there is an $M$-generic
condition below $p$.
\end{definition}
It can be shown that it is actually equivalent to consider only some
fixed $\lambda>2^{|\p|}$ and to show that generic conditions exist
only for a club of countable $M\prec H_\lambda$ \cite{shelah:proper}
(p.\ 102).
\begin{definition}
\emph{The Proper Forcing Axiom} ({\rm PFA}) is the assertion that
for every proper poset $\p$ and every collection $\mathcal D$ of at
most $\omega_1$ many dense subsets of $\p$, there is a filter on
$\p$ that meets all of them.
\end{definition}
Proper forcing was invented by Shelah, who sought a class of
$\omega_1$-preserving forcing notions that would be preserved under
countable support iterations (for an introduction to proper forcing
see \cite{jech:settheory} (p. 601) or \cite{shelah:proper}).  The
two familiar classes of $\omega_1$-preserving forcing notions,
namely the c.c.c.\ and countably closed forcing notions, turn out to
be proper as well. The Proper Forcing Axiom, introduced by
Baumgartner \cite{baumgartner:pfa}, is easily seen to be a
generalization of Martin's Axiom since c.c.c.\ posets are proper and
{\rm PFA} decides the size of the continuum is $\omega_2$. The later
fact is a highly nontrivial result in \cite{veli:pfa}. In many
respects, however, {\rm PFA} is very much unlike {\rm MA}. Not only
does it decide the size of the continuum, the axiom also has large
cardinal strength. The best known large cardinal upper bound on the
consistency of {\rm PFA} is a supercompact cardinal
\cite{baumgartner:pfa}. Much fruitful set theoretical work in recent
years has involved {\rm PFA} and its consequences.

\section {Proof of the Main Theorem}\label{sec:proof}
I will use {\rm PFA} to prove the \emph{$\omega_2$-Ehrenfeucht
Principle for proper Scott sets}. The main theorem will follow as a
corollary.

A filter $G$ on the poset $\x/\fin$ is easily seen to be a filter on
the Boolean algebra $\x$. By extending $G$ to a larger filter if
necessary, we can assume without loss of generality that $G$ is an
ultrafilter. Recall that to prove the $\omega_2$-Ehrenfeucht
Principle, given a model $M$ of size $\omega_1$ and a Scott set $\x$
such ${\rm SSy(M)}\subseteq \x$, we need to find for every $A\in\x$,
an elementary extension $N$ such that $A\in {\rm SSy}(N)\subseteq
\x$. The strategy will be to find $\omega_1$ many dense subsets of
$\x/\fin$ such that if $G$ is a partially generic ultrafilter
meeting all of them, then the standard system of the ultrapower of
$M$ by $G$ will stay within $\x$.  Thus, if $\x$ is proper, we will
be able to use {\rm PFA} to obtain such an ultrafilter. I will also
show that to every $A\in\x$, there corresponds a set $B\in\x/\fin$
such that whenever $B$ is in an ultrafilter $G$, the set $A$ will
end up in the ultrapower of $M$ by $G$.

Let $\mathcal S\subseteq \power(\n)$ and expand the language of
arithmetic $\lan_A$ to include unary predicates for all $A\in
\mathcal S$.  Then the structure $\n_\mathcal S=\langle \n,
A\rangle_{A\in\mathcal S}$ is a structure of this expanded language
with the natural interpretation. Since Scott sets are closed under
relative computability, basic computability theory arguments show
that if $\x$ is a Scott set, the structure $\n_{\x}=\langle \n,
A\rangle_{A\in \x}$ is closed under $\Delta_1$-definability. That
is, if $B$ is $\Delta_1$-definable in $\n_\x$, then $B\in \x$.

\begin{definition}\label{def:closed}
A collection $\mathcal S\subseteq\power(\n)$ is \emph{arithmetically
closed} if the structure $\n_\mathcal S=\langle \n,
A\rangle_{A\in\mathcal S}$ is closed under definability. That is, if
$B$ is definable in $\n_\mathcal S$, then $B\in\mathcal S$.
\end{definition}
A Scott set $\x$ is \emph{arithmetically closed} simply when it
satisfies Definition \ref{def:closed}. Observe actually that if
$\mathcal S$ is arithmetically closed, then it is a Scott set. Thus,
arithmetic closure subsumes the definition of a Scott set. An easy
induction on the complexity of formulas establishes that if $\x$ is
a Boolean algebra of sets and $\n_\x=\langle \n, A\rangle_{A\in \x}$
is closed under $\Sigma_1$-definability, then $\x$ is arithmetically
closed. Hence a Scott set is arithmetically closed if and only if it
is closed under the Turing jump operation.

\begin{definition}
Say that $\langle B_n\mid n\in\n\rangle$ is \emph{coded} in $\x$ if
there is $B\in \x$ such that $B_n=\{m\in \n\mid \langle n,m\rangle
\in B\}$. Given $\langle B_n\mid n\in\n\rangle$ coded in $\x$ and
$C\in\x/\fin$, say that $C$ \emph{decides} $\langle B_n\mid
n\in\n\rangle$ if whenever $U$ is an ultrafilter on $\x$ and $C\in
U$, then $\{n\in\n\mid B_n\in U\}\in \x$. Call a Scott set $\x$
\emph{decisive} if for every $\langle B_n\mid n\in \n\rangle$ coded
in $\x$, the set $\mathscr D=\{C\in\x/\fin\mid C\text{
decides}\break\langle B_n\mid n\in\n\rangle\}$ is dense in
$\x/\fin$.
\end{definition}

Decisiveness is precisely the property of a Scott set which is
required for our proof of the main theorem. I will show below that
decisiveness is equivalent to arithmetic closure.
\begin{lemma}\label{th:decisive}
The following are equivalent for a Scott set $\x$:
\begin{itemize}
\item[(1)] $\x$ is arithmetically closed.
\item[(2)] $\x$ is decisive.
\item[(3)] For every sequence $\la B_n\mid
n\in\n\ra$ coded in $\x$, there is $C\in\x/\fin$ deciding $\la
B_n\mid n\in\n\ra$.
\end{itemize}
\end{lemma}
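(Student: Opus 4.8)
The plan is to establish the cycle $(1)\Rightarrow(2)\Rightarrow(3)\Rightarrow(1)$, where $(2)\Rightarrow(3)$ is immediate since a dense set is nonempty. The real content lies in $(1)\Rightarrow(2)$ and $(3)\Rightarrow(1)$.

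For $(1)\Rightarrow(2)$, suppose $\x$ is arithmetically closed and fix $\la B_n\mid n\in\n\ra$ coded by some $B\in\x$. Given any $P\in\x/\fin$, I want to find $C\leq P$ in $\x$ that decides the sequence. The idea is to thin out $P$ by a fusion-style recursion so that along $C$ the membership of each $B_n$ in any ultrafilter containing $C$ is already settled. Concretely: enumerate $P=\{p_0<p_1<\cdots\}$ and recursively decide, for each $n$, whether to commit to ``$B_n\supseteq$ (a tail of $C$)'' or ``$B_n\cap C$ finite'' — at stage $n$ we have an infinite subset still in $\x$ (definable from $B$ and the finitely many earlier choices, hence in $\x$ by arithmetic closure), and we can always pass to an infinite subset that is either almost contained in $B_n$ or almost disjoint from it, because one of $B_n$ and $\n\setminus B_n$ meets the current set infinitely. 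Interleaving this with the enumeration of $P$ (a diagonal/fusion argument) yields an infinite $C\subseteq_\fin P$ with $C\in\x$. The key arithmetic point is that the entire construction is arithmetic in $B$, so $C\in\x$; and once $C$ is in an ultrafilter $U$, the set $\{n\mid B_n\in U\}$ equals $\{n\mid C\subseteq_\fin B_n\}$, which is arithmetic in $B$ and $C$, hence in $\x$. So $C$ decides the sequence, proving density.

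For $(3)\Rightarrow(1)$, I will show that (3) implies $\x$ is closed under the Turing jump, which by the remark preceding the lemma suffices for arithmetic closure. Given $A\in\x$, I need $A'\in\x$. The trick is to encode the jump question as a sequence: let $B_n$ code, for each $n$, an infinite binary tree $T_n$ (computable from $A$) whose branches correspond to the halting/non-halting of the $n$-th machine with oracle $A$ — arrange it so that $T_n$ has a branch iff $n\notin A'$, or use the standard trick where the sequence of trees lets one read off $A'$ from which $B_n$ land in any ultrafilter. More carefully: choose $B_n$ so that $B_n$ is cofinite (hence in every ultrafilter) when $n\in A'$ and finite (hence in no ultrafilter on $\x/\fin$) when $n\notin A'$; then for \emph{any} $C\in\x/\fin$ and any ultrafilter $U\ni C$ we get $\{n\mid B_n\in U\}=A'$, so by (3) (applied to the $C$ that decides this sequence, together with the observation that every infinite set in $\x$ extends to an ultrafilter on $\x$) we conclude $A'\in\x$. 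Defining such $B_n$ uniformly computably from $A$ is the technical step — one uses that ``$n\in A'$'' is $\Sigma_1(A)$ and lays out $B_n$ as the set of stages before which the computation halts, which is cofinite exactly when it halts.

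The main obstacle is the fusion argument in $(1)\Rightarrow(2)$: one must verify that the inductively constructed approximations genuinely stay inside $\x$ (this is where arithmetic closure, not merely Scott-set closure, is essential, since the recursion quantifies over $\n$ at each stage) and that the diagonalization against the enumeration of $P$ produces an infinite — not merely unbounded — subset coded in $\x$. A secondary subtlety is checking that an arbitrary infinite $C\in\x/\fin$ extends to an \emph{ultrafilter on the Boolean algebra $\x$} so that the hypothesis of the definition of ``decides'' is actually triggered; this is routine (extend the principal-like filter generated by $C$ within $\x$), but it must be stated.
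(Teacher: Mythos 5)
Your overall architecture ((1)$\Rightarrow$(2)$\Rightarrow$(3)$\Rightarrow$(1)) matches the paper's, and your (3)$\Rightarrow$(1) is essentially identical to the paper's argument: the sets $B_n$ recording the stages by which the $n$-th computation with oracle $A$ has halted on input $n$ are cofinite or empty according as $n$ does or does not lie in the jump, so any deciding condition, together with any ultrafilter through it, places the jump in $\x$. The gap is in (1)$\Rightarrow$(2), and it sits exactly where you flagged ``the main obstacle'' --- but you do not close it. Your fusion recursion chooses, at stage $n$, between $C_n\cap B_n$ and $C_n\setminus B_n$ according to which is infinite. Each individual $C_n$ is arithmetic in the parameters, but the predicate ``is infinite'' is $\Pi^0_2$, so the naive recursion raises the quantifier complexity at every stage; after $\omega$ stages the full sequence $\la C_n\mid n\in\n\ra$ threatens to be computable only from the $\omega$-th jump of the parameters, which arithmetic closure does not supply. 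The assertion that ``the entire construction is arithmetic in $B$'' is therefore the whole content of the implication, and it is asserted rather than proved.

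The paper closes precisely this hole with a tree. Define $B_s$ for \emph{every} finite binary string $s$ by $B_\emptyset=P$, $B_{s1}=B_s\cap B_n$ and $B_{s0}=B_s\cap(\n-B_n)$ for $|s|=n$; these are uniformly computable from $P$ and the code of the sequence, so the tree $T=\{s\in 2^{<\omega}\mid B_s\text{ is infinite}\}$ is arithmetic in those parameters with a fixed number of quantifiers and hence lies in $\x$. The Scott-set tree property then yields a cofinal branch in $\x$, and the diagonal set $C$ built along that branch is arithmetic in the branch and the parameters. (Your recursion can in fact be salvaged without invoking the tree property, by noting that the greedy branch is characterized by a local $\Delta^0_3$ condition --- possible only because the candidate sets $B_s$ are defined uniformly for all $s$, not just along the chosen path --- but spelling this out is exactly the verification you left open.) Your secondary worries are not substantive: the diagonal set is an increasing enumeration and so automatically infinite, and extending the filter generated by an infinite $C\in\x$ to an ultrafilter on the Boolean algebra $\x$ is Zorn's Lemma.
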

\begin{proof}
$\,$\\
(1)$\Longrightarrow$(2):\footnote{Similar arguments have appeared in
\cite{engstrom:thesis} and other places.} Assume that $\x$ is
arithmetically closed. Fix $A\in\x/\fin$ and a sequence $\langle
B_n\mid n\in\n\rangle$ coded in $\x$. We need to show that there is
an element in $\x/\fin$ below $A$ deciding $\langle B_n\mid
n\in\n\rangle$. For every finite binary sequence $s$, we will define
$B_s$ by induction on the length of $s$. Let $B_\emptyset=A$. Given
$B_s$, where $s$ has length $n$, define $B_{s1}=B_s\cap B_n$ and
$B_{s0}=B_s\cap (\n-B_n)$. Define the binary tree $T=\{s\in
2^{<\omega}\mid B_s\text{ is infinite}\}$. Clearly $T$ is infinite
since if we split an infinite set into two pieces one of them must
still be infinite. Since $\x$ is arithmetically closed and $T$ is
arithmetic in $A$ and $\langle B_n\mid n\in \n\rangle$, it follows
that $T\in\x$. Thus, $\x$ contains a cofinal branch $P$ through $T$.
Define $C=\{b_n\mid n\in\n\}$ such that $b_0$ is least element of
$B_\emptyset$ and $b_{n+1}$ is least element of $B_{P\upharpoonright
n}$ that is greater than $b_n$. Clearly $C$ is infinite and
$C\subseteq A$. Now suppose $U$ is an ultrafilter on $\x$ and $C\in
U$, then $B_n\in U$ if and only if $C\subseteq_\fin B_n$. Thus,
$\{n\in\n\mid B_n\in U\}=\{n\in \n\mid
C\subseteq_\fin B_n \}\in \x$ since $\x$ is arithmetically closed.\\
(2)$\Longrightarrow$(3):
Clear.\\
(3)$\Longrightarrow$(1)\footnote{I am grateful to Joel Hamkins for
pointing out this argument.}: It suffices to show that $\x$ is
closed under the Turing jump operation. Fix $A\in\x$ and define the
sequence $\la B_n\mid n\in\n\ra$ by $k\in B_n$ if and only if the
Turing program coded by $n$ with oracle $A$ halts on input $n$ in
less than $k$ many steps. Clearly the sequence is computable from
$A$, and hence coded in $\x$. Let $H=\{n\in\n\mid$ the program coded
by $n$ with oracle $A$ halts on input $n\}$ be the halting problem
for $A$. It should be clear that $n\in H$ implies that $B_n$ is
cofinite and $n\notin H$ implies that $B_n=\emptyset$. Let $C\in
\x/\fin$ deciding $\la B_n\mid n\in\n\ra$ and $U$ be any ultrafilter
containing $C$, then $\{n\in\n\mid B_n\in U\}\in \x$. But this set
is exactly $H$. This shows that $H\in \x$, and hence $\x$ is closed
under the Turing jump operation.
\end{proof}
\begin{theorem}\label{th:ehren}
Assuming {\rm PFA}, the $\omega_2$-Ehrenfeucht Principle for proper
Scott sets holds. That is, if $\x$ is a proper Scott set and $M$ is
a model of\/ {\rm PA} of size $\omega_1$ whose standard system is
contained in $\x$, then for any $A\in\x$, there is an elementary
extension $M\prec N$ such that $A\in {\rm SSy}(N)\subseteq \x$.
\end{theorem}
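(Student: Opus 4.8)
The plan is to realize the desired elementary extension $N$ as (an initial segment of) an ultrapower $M^{\n}/U$, where $U$ is a suitably generic ultrafilter on the Boolean algebra $\x$, obtained by applying {\rm PFA} to the poset $\x/\fin$. Since $\x$ is a proper Scott set, $\x/\fin$ is a proper poset, so {\rm PFA} will furnish a filter meeting any prescribed family of $\omega_1$ many dense sets; the work is to write down the right $\omega_1$ dense sets so that the resulting ultrapower has standard system inside $\x$ and contains $A$. Throughout I use that, by Lemma~\ref{th:decisive}, arithmetic closure of $\x$ is equivalent to decisiveness, which is exactly what lets me control the standard system.

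\medskip

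\noindent\textbf{Construction of the ultrafilter.} First I fix an enumeration, in order type $\omega_1$, of all sequences $\la B_n\mid n\in\n\ra$ that are coded in $\x$ by a code lying in the (size-$\omega_1$) set $\x$; there are only $\omega_1$ many such sequences. For each such sequence I let $\mathscr D_{\la B_n\ra}=\{C\in\x/\fin\mid C\text{ decides }\la B_n\mid n\in\n\ra\}$; by decisiveness (Lemma~\ref{th:decisive}) each $\mathscr D_{\la B_n\ra}$ is dense in $\x/\fin$. Next, for the fixed $A\in\x$ I must ensure $A$ is captured by the ultrapower; the idea flagged in the introduction is that to $A$ there corresponds a condition $B_A\in\x/\fin$ such that membership of $B_A$ in $U$ forces $A$ into ${\rm SSy}(M^{\n}/U)$ — concretely, one takes the sequence of ``columns'' that a fixed nonstandard element of $M$ would need to witness $A$, and lets $B_A$ be a pseudo-intersection-type condition built from it, exactly as in the proof of (1)$\Rightarrow$(2) of Lemma~\ref{th:decisive}. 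Finally I add the $\omega_1$ many dense sets $E_n=\{C\in\x/\fin\mid C\subseteq_\fin D_n\text{ or }C\cap D_n=\emptyset\text{ for }D_n\text{ the $n$th element of }M\}$ (an enumeration of $M$'s elements, using $|M|=\omega_1$) guaranteeing $U$ is an \emph{ultra}filter on $\x$ and decides enough to define the quotient; since $\x/\fin$ has size $\le 2^{\omega}=\omega_2$ under {\rm PFA}, and $M$ has size $\omega_1$, this is a legitimate family of $\omega_1$ dense sets. Apply {\rm PFA} to $\x/\fin$ with all these dense sets to get a filter $G$; extend the induced filter on $\x$ to an ultrafilter $U$ (or arrange $G$ itself to generate one via the $E_n$'s).

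\medskip

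\noindent\textbf{Verifying the ultrapower works.} Form the ultrapower $M^{\n}/U$ using $U$ as an ultrafilter on subsets of $\n$ (note $\n\subseteq M$ is the standard cut, and functions $\n\to M$ that are coded in $M$ suffice); by \L o\'s's theorem this is an elementary extension of $M$, and the standard embedding sends $m\mapsto[c_m]$. I then close under initial segment and call the result $N$; as in the proof of Ehrenfeucht's Lemma, $N$ is cofinal and $\Delta_0$-elementary in $M^{\n}/U$, hence $M\prec N$ by Gaifman, and ${\rm SSy}(N)={\rm SSy}(M^{\n}/U)$. The crucial claim is ${\rm SSy}(M^{\n}/U)\subseteq\x$: a set in the standard system of the ultrapower is coded by some $[f]$, and unpacking the coding, membership of $k$ in that set is equivalent to $\{n\in\n\mid B^f_n\ni k\ldots\}\in U$ for an appropriate sequence $\la B^f_n\ra$ \emph{coded in $\x$} (because $\x\supseteq{\rm SSy}(M)$ and $f$ can be taken coded in $M$); since $G$ met the dense set $\mathscr D_{\la B^f_n\ra}$, some $C\in U$ decides this sequence, so $\{n\mid B^f_n\in U\}\in\x$, which is (up to a computable rearrangement, absorbed by $\x$ being a Scott set) exactly the coded set. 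Hence it lies in $\x$. Finally, because $B_A\in U$ (the condition from $G$ chosen to force this), the same analysis shows $A\in{\rm SSy}(M^{\n}/U)={\rm SSy}(N)$.

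\medskip

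\noindent\textbf{Main obstacle.} The delicate point is \emph{properness of $\x/\fin$ is a hypothesis, not something to prove} — so the genuine work is purely combinatorial: pinning down the finite family of dense sets so that (a) they really are dense (this is where decisiveness / arithmetic closure is indispensable and is the reason the hypothesis cannot be dropped), (b) a generic filter for them yields a bona fide ultrafilter on $\x$ whose ultrapower's standard system is \emph{exactly} controllable, and (c) the bookkeeping sequences $\la B^f_n\ra$ arising from arbitrary ultrapower-coded sets are themselves coded in $\x$ — this last point rests on $\x$ being arithmetically closed and on choosing the ultrapower functions from $M$ so their relevant traces live in ${\rm SSy}(M)\subseteq\x$. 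I also need $|\x/\fin|\le\omega_1$ \emph{or} to only ever need $\omega_1$ dense sets even when $|\x|=\omega_2$; the introduction's remark that this handles Scott sets of size continuum tells me the intended reading is the latter, so I must be careful that the family of dense sets is indexed by $\x$-coded sequences and by elements of $M$, both of size $\omega_1$, rather than by all conditions of $\x/\fin$.
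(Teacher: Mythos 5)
Your proposal follows essentially the same route as the paper: a Kirby--Paris-style ultrapower of $M$ (using only functions $f:\n\to M$ coded in $M$) by a partially generic ultrafilter on $\x$, obtained by applying {\rm PFA} to the proper poset $\x/\fin$, with the dense sets supplied by decisiveness (Lemma~\ref{th:decisive}) and a single pseudo-intersection condition securing $A\in{\rm SSy}(N)$ (the paper's Lemma~\ref{l:add}, where $[id]_G$ is shown to code $A$). Two points need correcting. First, your opening claim that there are only $\omega_1$ many sequences coded in $\x$ is false in the case that matters: under {\rm PFA} the continuum is $\omega_2$ and the theorem is aimed precisely at proper $\x$ of size $\omega_2$, so the family of \emph{all} $\x$-coded sequences is too large for {\rm PFA}. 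The repair --- which your verification paragraph implicitly contains but your ``obstacle'' paragraph again misstates --- is to index the dense sets by the functions $f:\n\to M$ coded in $M$; there are only $|M|=\omega_1$ of these, and each associated sequence $\la B_{n,f}\mid n\in\n\ra$ is coded in ${\rm SSy}(M)\subseteq\x$, which is all that is needed. Second, the auxiliary dense sets $E_n$ meant to make $G$ itself an ultrafilter cannot work as written (deciding every member of a size-$\omega_2$ Boolean algebra would require $\omega_2$ dense sets, and ``the $n$th element of $M$'' is not a subset of $\n$); they are also unnecessary, since the definition of ``decides'' quantifies over \emph{every} ultrafilter containing $C$, so one simply extends the filter generated by $G$ to an arbitrary ultrafilter on the Boolean algebra $\x$ --- your stated fallback, and the paper's choice. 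The Gaifman/initial-segment detour is likewise superfluous: \L o\'{s}'s Lemma already makes the ultrapower a full elementary (and in fact cofinal, since coded functions are bounded) extension of $M$, so one may take $N=\Pi_\x M/G$ directly.
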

\begin{proof}
I will build $N$ using a variation on the ultrapower construction
introduced by Kirby and Paris \cite{kirby:ults}. Fix a model $M$ of
{\rm PA} and a Scott set $\x$ such that {\rm SSy}$(M)\subseteq\x$.
Let $G$ be some ultrafilter on $\x$. If $f:\n\to M$, we say that $f$
is \emph{coded} in $M$ when there is $a\in M$ such that $(a)_n=f(n)$
for all $n\in\n$. Given $f$ and $g$ coded in $M$, define $f\sim_G g$
if $\{n\in\n\mid f(n)=g(n)\}\in G$. The definition makes sense since
clearly $\{n\in\n\mid f(n)=g(n)\}\in$ {\rm SSy}$(M)\subseteq\x$. The
classical ultrapower construction uses an ultrafilter on
$\power(\n)$ and all functions from $\n$ to $M$. This construction
uses only functions coded in $M$, and therefore needs only an
ultrafilter on {\rm SSy}$(M)\subseteq\x$.  As in the classical
construction, we get an equivalence relation and a well-defined
$\lan_A$ structure on the equivalence classes. The proof relies on
the fact that $\x$ is a Boolean algebra. Call $\Pi_\x M/G$ the
collection of equivalence classes $[f]_G$ where $f$ is coded in $M$.
Also, as usual, we get:
\begin{sublemma}
\L o\'{s} Lemma holds.  That is, $\Pi_\x M/G\models \varphi([f]_G)$
if and only if $\{n\in\n\mid M\models \varphi(f(n))\}\in G$.
\end{sublemma}
\begin{proof}
Similar to the classical proof of the \L o\'{s} Lemma.
\end{proof}

\begin{sublemma}\label{l:add}
For every $A\in\x$, there is $B\in \x/\fin$ such that if $G$ is any
ultrafilter on $\x$ containing $B$, then $A\in {\rm SSy}(\Pi_\x
M/G)$.
\end{sublemma}
\begin{proof}
Let $\chi_A$ be the characteristic function of $A$.  For every
$n\in\n$, define $B_n=\{m\in\n\mid (m)_n=\chi_A(n)\}$. Then clearly
each $B_n\in \x$ and $\langle B_n\mid n\in\n\rangle$ is coded in
$\x$ since the sequence is arithmetic in $A$.  Observe that the
intersection of any finite number of $B_n$ is infinite. Let
$B=\{b_n\mid n\in\n\}$ where $b_0$ is least element of $B_0$ and
$b_{n+1}$ is least element of $\cap_{m\leq n+1}B_m$ that is greater
than $b_n$. Then $B\subseteq_\fin B_n$ for all $n\in\n$ and $B\in
\x$ since it is arithmetic in $\la B_n:n\in\n\ra$. It follows that
if $G$ is any ultrafilter containing $B$, then $G$ must contain all
the $B_n$ as well. Let $G$ be an ultrafilter containing $B$. Let
$id:\n\to \n$ be the identity function. I claim that
$([id]_G)_n=\chi_A(n)$. It will follow that $A\in {\rm SSy}(\Pi_\x
M/G)$. But this is true since $([id]_G)_n=\chi_A(n)$ if and only if
$\{m\in\n\mid (m)_n=\chi_A(n)\}=B_n\in G$.
\end{proof}

Lemma \ref{l:add} tells us that if we want to add some set $A$ to
the standard system of the ultrapower that we are building, we just
have to make sure that a correct set gets put into the ultrafilter.
It follows that that we can build ultrapowers of $M$ having any
given element of $\x$ in the standard system.

The crucial step of the construction is to find a family of size
$\omega_1$ of dense subsets of $\x/\fin$ such that if the
ultrafilter meets all members of the family, the standard system of
the ultrapower stays within $\x$. It is in this step that we need
the \emph{decisiveness} of $\x$.

Recall that a set $E$ is in the standard system of a nonstandard
model if and only if there is an element $e$ such that
$E=\{n\in\n\mid (e)_n=1$\}, meaning $E$ is \emph{coded} in the
model. Thus, we have to show that the sets coded by elements of
$\Pi_\x M/G$ are in $\x$.
\begin{sublemma}
For every function $f:\n\to M$ coded in $M$, there is a dense subset
$\mathscr D_f$ of\/ $\x/\fin$ such that if $G$ meets $\mathscr D_f$,
then $[f]_G\in \Pi_\x M/G$ codes a set in $\x$.
\end{sublemma}
\begin{proof}
Fix a function $f:\n\to M$ coded in $M$ and let $E_f=\{n\in\n\mid
\Pi_\x M/G\models ([f]_G)_n=1\}$. By \L o\'{s} Lemma, $\Pi_\x
M/G\models ([f]_G)_n=1$ if and only if\break $\{m\in \n\mid
(f(m))_n=1\}\in G$. Define $B_{n,f}=\{m\in\n\mid (f(m))_n=1\}$ and
note that $\langle B_{n,f}\mid n\in \n\rangle$  is coded in ${\rm
SSy}(M)$. Observe that $n\in E_f$ if and only if $B_{n,f}\in G$.
Thus, we have to make sure that $\{n\in \n\mid B_{n,f}\in G\}\in
\x$. Let $\mathscr D_f=\{C\in \x/\fin\mid C \text{ decides }\langle
B_{n,f}\mid n\in\n\rangle\}$. Since $\x$ is decisive, $\mathscr D_f$
is dense. Clearly if
 $G$ meets $\mathscr
D_f$, the set coded by $[f]_G$ will be in $\x$.
\end{proof}

Now we can finish the proof of Theorem \ref{th:ehren}.   Let
$\mathcal{D}=\{\mathscr D_f\mid f:\n\to M$ is coded in $M\}$. Since
$M$ has size $\omega_1$, the collection $\mathcal{D}$ has size
$\omega_1$ also. Assuming {\rm PFA} guarantees that we can find an
ultrafilter $G$ meeting every $\mathscr D_f\in \mathcal{D}$. But
this is precisely what forces the standard system of $\Pi_\x M/G$ to
stay inside $\x$.
\end{proof}
The main theorem now follows directly from Theorem \ref{th:ehren}.
\begin{proof} [Proof of Main Theorem] Since {\rm PFA}
implies $2^\omega=\omega_2$ and Scott sets of size $\omega_1$ are
already handled by Knight and Nadel's result, we only need to
consider Scott sets of size $\omega_2$. But the result for these
follows from Theorem \ref{th:ehrenprin} and the
$\omega_2$-Ehrenfeucht Principle established by Theorem
\ref{th:ehren}.
\end{proof}

\section{Extensions of Ehrenfeucht's Lemma} Below, I will go through some results related to the
question of extending Ehrenfeucht's Lemma to models of size
$\omega_1$ ($\omega_2$-Ehrenfeucht Principle).

Theorem \ref{th:ehren} shows that in a universe satisfying {\rm
PFA}, the \emph{$\omega_2$-Ehrenfeucht Principle for proper Scott
sets} holds. Next, I will use the same techniques to show that the
\emph{$\kappa$-Ehrenfeucht Principle for arithmetically closed Scott
sets} holds for all $\kappa$ if we only consider models with
\emph{countable} standard systems. For this argument, we do not need
to use {\rm PFA} or properness.
\begin{theorem} \label{th:countehren}
If $M$ is a model of {\rm PA} whose standard system is countable and
contained in an arithmetically closed Scott set $\x$, then for any
$A\in\x$, there is an elementary extension $M\prec N$ such that
$A\in {\rm SSy}(N)\subseteq \x$.
\end{theorem}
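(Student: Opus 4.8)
The plan is to re-run the ultrapower argument from the proof of Theorem~\ref{th:ehren} essentially unchanged, observing that when $\ssy(M)$ is countable the family of dense sets one must meet collapses to a countable family, so that no forcing axiom and no properness assumption is needed: a direct Rasiowa--Sikorski-style recursion builds the required ultrafilter. Before that I would dispose of the case $M=\n$ separately. If $M$ is the standard model, then $\ssy(\n)$ is exactly the collection of arithmetic sets; an arithmetically closed Scott set contains every set arithmetic in $\emptyset$, so $\ssy(\n)\subseteq\x$ and the conclusion is precisely an instance of Ehrenfeucht's Lemma (Theorem~\ref{th:ehrenlemma}). So from now on I assume $M$ is nonstandard and fix $A\in\x$.

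As in Theorem~\ref{th:ehren}, I would take $N$ to be a Kirby--Paris ultrapower $\Pi_\x M/G$, where $G$ is an ultrafilter on $\x$ to be constructed. For any choice of $G$ the diagonal map $m\mapsto[c_m]_G$ is elementary by the \L o\'{s} Lemma --- here $c_m$ is the constant function with value $m$, which is coded in the nonstandard model $M$ by a coded sequence of nonstandard length --- so $M\prec N$ automatically. By Lemma~\ref{l:add} there is a single fixed condition $B\in\x/\fin$ with the property that $A\in\ssy(\Pi_\x M/G)$ whenever $B\in G$. The remaining genericity requirement, exactly as in Theorem~\ref{th:ehren}, is that $G$ meet the dense set $\mathscr D_f=\{C\in\x/\fin\mid C\text{ decides }\la B_{n,f}\mid n\in\n\ra\}$ for every $f\colon\n\to M$ coded in $M$; each $\mathscr D_f$ really is dense because $\x$, being arithmetically closed, is decisive by Lemma~\ref{th:decisive}. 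The one new observation is a counting one: $\mathscr D_f$ depends on $f$ only through the sequence $\la B_{n,f}\mid n\in\n\ra$, and this sequence is coded in $\ssy(M)$; since $\ssy(M)$ is countable there are only countably many such sequences, so $\mathcal D=\{\mathscr D_f\mid f\colon\n\to M\text{ coded in }M\}$ is a \emph{countable} family of dense subsets of $\x/\fin$.

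Now I would finish without invoking \pfa. Enumerate $\mathcal D$ as $\{\mathscr D^{(k)}\mid k\in\n\}$, put $p_0=B$, and recursively choose $p_{k+1}\le p_k$ with $p_{k+1}\in\mathscr D^{(k)}$, using density of $\mathscr D^{(k)}$ below $p_k$. The descending sequence $\la p_k\mid k\in\n\ra$ generates a proper filter on $\x/\fin$, hence a proper filter on the Boolean algebra $\x$, which I extend to an ultrafilter $G$. Then $B=p_0\in G$, so $A\in\ssy(N)$ by Lemma~\ref{l:add}; and $G$ meets every $\mathscr D_f$, so by the argument in the proof of Theorem~\ref{th:ehren} every set coded in $N$ belongs to $\x$, which gives $\ssy(N)\subseteq\x$ since $N$ is nonstandard. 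I expect the genuine content to be confined to the counting argument of the second paragraph; the remaining points --- that constant functions are coded in a nonstandard $M$, and that extending the generated filter to an ultrafilter preserves all the required meetings (immediate, since each $p_k$ already lies in $G$) --- are routine.
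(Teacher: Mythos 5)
Your proposal is correct and follows essentially the same route as the paper: the key point in both is that $\mathscr D_f$ is determined by the code $B_f$ of $\la B_{n,f}\mid n\in\n\ra$, which lies in the countable $\ssy(M)$, so only countably many dense sets need to be met and a Rasiowa--Sikorski recursion suffices in place of \pfa. Your explicit treatment of the case $M=\n$ (where the coded-function ultrapower degenerates) and of seeding the descending sequence with the condition $B$ from Lemma~\ref{l:add} are details the paper leaves implicit, but they do not change the argument.
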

\begin{proof}
Fix an arithmetically closed Scott set $\x$ and a model $M$ of {\rm
PA} such that ${\rm SSy}(M)$ is countable and contained in $\x$. To
mimic the proof of Theorem \ref{th:ehren}, we need to find an
ultrafilter $G$ on $\x$ which meets the dense sets $\mathscr
D_f=\{C\in \x/\fin\mid C \text{ decides }\langle B_{n,f}\mid
n\in\n\rangle\}$. I claim that there are only countably many
$\mathscr D_f$. If this is the case, then such an ultrafilter exists
without any forcing axiom assumption. Given $f:\n\to M$, let $B_f$
code $\langle B_{n,f}\mid n\in\n\rangle$. There are possibly as many
$f$ as elements of $M$, but there can be only countably many $B_f$
since each $B_f\in {\rm SSy}(M)$. It remains only to observe that
$\mathscr D_f$ is determined by $B_f$. So there are as many
$\mathscr D_f$ as there are different $B_f$. Thus, there are only
countably many $\mathscr D_f$ in spite of the fact that $M$ can be
arbitrarily large.
\end{proof}
The same idea can be used to extend Theorem \ref{th:ehren} to show
that the \emph{$\kappa$-Ehrenfeucht Principle for proper Scott sets}
holds for all $\kappa$ if we consider only models whose standard
system has size $\omega_1$.
\begin{theorem}
Assuming {\rm PFA}, if\/ $\x$ is a proper Scott set and $M$ is a
model of {\rm PA} whose standard system has size $\omega_1$ and is
contained in $\x$, then for any $A\in \x$, there is an elementary
extension $M\prec N$ such that $A\in {\rm SSy}(N)\subseteq \x$.
\end{theorem}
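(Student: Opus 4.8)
The plan is to combine the two refinements already developed in this section. Recall that Theorem~\ref{th:ehren} proceeds by building the Kirby--Paris ultrapower $\Pi_\x M/G$ and arranging, via \pfa, that the ultrafilter $G$ on $\x$ meets the family $\mathcal D=\{\mathscr D_f\mid f\colon\n\to M\text{ coded in }M\}$ together with the single set $B$ supplied by Lemma~\ref{l:add} for the target $A\in\x$. The only place size $\omega_1$ was used for $M$ was to bound $|\mathcal D|$ by $\omega_1$, so that \pfa\ applies. But the argument of Theorem~\ref{th:countehren} shows that $\mathscr D_f$ depends only on the set $B_f\in\x$ coding $\langle B_{n,f}\mid n\in\n\rangle$, and each such $B_f$ lies in $\ssy(M)$. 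Hence $|\mathcal D|\le|\ssy(M)|$, regardless of $|M|$.

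So the proof would run as follows. First I would fix a proper Scott set $\x$ and a model $M\models\pa$ with $\ssy(M)\subseteq\x$ and $|\ssy(M)|=\omega_1$, and fix $A\in\x$. Next I would form, exactly as in Theorem~\ref{th:ehren}, the dense sets $\mathscr D_f=\{C\in\x/\fin\mid C\text{ decides }\langle B_{n,f}\mid n\in\n\rangle\}$, which are genuinely dense because $\x$, being arithmetically closed, is decisive (Lemma~\ref{th:decisive}). Then I would observe, following Theorem~\ref{th:countehren}, that $\mathscr D_f$ is determined by $B_f\in\ssy(M)$, so $\mathcal D=\{\mathscr D_f\mid f\colon\n\to M\text{ coded in }M\}$ has size at most $\omega_1$. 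Adjoining to $\mathcal D$ the single dense set $\{C\in\x/\fin\mid C\subseteq_\fin B\}$ below the condition $B$ from Lemma~\ref{l:add}, I still have at most $\omega_1$ dense subsets of the proper poset $\x/\fin$; \pfa\ yields a filter $G$ meeting all of them, which we extend to an ultrafilter on $\x$. By Lemma~\ref{l:add}, $A\in\ssy(\Pi_\x M/G)$; by \L o\'s's Lemma and the fact that $G$ meets every $\mathscr D_f$, every set coded in $\Pi_\x M/G$ lies in $\x$, so $\ssy(\Pi_\x M/G)\subseteq\x$. Finally, as in the proof of Ehrenfeucht's Lemma, $M$ embeds elementarily into $\Pi_\x M/G$ via the diagonal map (\L o\'s's Lemma gives this), and passing to a suitable elementary submodel $N$ if one insists on a literal extension, we obtain $M\prec N$ with $A\in\ssy(N)\subseteq\x$.

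There is essentially no new obstacle here; the theorem is a direct payoff of isolating ``$|\mathcal D|\le|\ssy(M)|$'' in Theorem~\ref{th:countehren} and ``$|\mathcal D|$-many dense sets suffice'' in Theorem~\ref{th:ehren}. The one point that deserves a line of care is that the diagonal embedding $M\to\Pi_\x M/G$ is elementary even though only functions \emph{coded in} $M$ are used in the ultrapower: this is immediate from the version of \L o\'s's Lemma proved above (applied to constant functions, which are coded in $M$ since $M\models\pa$), so it costs nothing. Thus the theorem follows, and the reader will note it is strictly stronger than Theorem~\ref{th:ehren} in that $M$ itself may be of any size provided $\ssy(M)$ has size $\omega_1$.
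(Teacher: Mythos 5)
Your proof is correct and takes essentially the same route the paper intends: the paper states this theorem without a separate proof, remarking only that it follows by combining the ultrapower construction of Theorem~\ref{th:ehren} with the counting observation of Theorem~\ref{th:countehren} that each $\mathscr D_f$ is determined by the code $B_f\in{\rm SSy}(M)$, which is exactly your argument. The one cosmetic point is that $\{C\in\x/\fin\mid C\subseteq_\fin B\}$ is dense only below $B$, so one should apply \pfa\ to the cone below $B$ (equivalently, use the standard form of \pfa\ that produces a filter containing a designated condition), but this is the same informality already present in the paper's proof of Theorem~\ref{th:ehren}.
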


It is also an easy consequence of an amalgamation result for models
of {\rm PA} that the $\kappa$-Ehrenfeucht Principle holds for all
$\kappa$ for models with a \emph{countable nonstandard elementary
initial segment}. Neither {\rm PFA} nor arithmetic closure is
required for this result.
\begin{theorem}
Suppose $M_0$, $M_1$, and $M_2$ are models of {\rm PA} such that
$M_0\prec_{cof} M_1$  and $M_0\prec_{end}M_2$.  Then there is an
amalgamation $M_3$ of $M_1$ and $M_2$ over $M_0$ such that
$M_1\prec_{end}M_3$ and $M_2\prec_{cof}M_3$. \emph{(See
\cite{kossak:book}, p. 40)}
\end{theorem}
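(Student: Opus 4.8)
The plan is to build $M_3$ as the Skolem hull of $M_1\cup M_2$ inside a common elementary extension of $M_1$ and $M_2$, and then to read off the end- and cofinal-extension clauses from the shape of the elements of this hull, using that $\pa$ has definable Skolem functions. First I would obtain \emph{some} amalgam: in the language $\lan$ of arithmetic augmented by constants for the elements of $M_1$ and of $M_2$, with the constants naming $M_0$ shared, the theory $\mathrm{ElDiag}(M_1)\cup\mathrm{ElDiag}(M_2)$ is consistent. Indeed, a finite fragment mentions only finitely many new constants, and since any $\lan$-formula with parameters in $M_0$ that is satisfiable in $M_1$ is — by $M_0\prec M_1$ and $M_0\prec M_2$ — already satisfiable in $M_2$, the model $M_2$ itself can interpret the extra $M_1$-constants; compactness finishes it. Fix a model $\widehat M$, which gives elementary embeddings of $M_1$ and of $M_2$ into $\widehat M$ agreeing on $M_0$, and let $M_3$ be the Skolem hull of $M_1\cup M_2$ in $\widehat M$, so $M_1\prec M_3$ and $M_2\prec M_3$. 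Every element of $M_3$ has the form $\tau(\bar a,\bar b)$ with $\tau$ a Skolem term, $\bar a$ from $M_1$ and $\bar b$ from $M_2$. The amalgam is automatically over $M_0$: an element of $M_2\setminus M_0$ sits above all of $M_0$ (as $M_0\prec_{end}M_2$), whereas every element of $M_1$ lies below an element of $M_0$ (as $M_0\prec_{cof}M_1$), so $M_1\cap M_2=M_0$ in $M_3$.

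For cofinality of $M_2$ in $M_3$: given $e=\tau(\bar a,\bar b)$, pick $c\in M_0$ above every coordinate of $\bar a$ (possible by $M_0\prec_{cof}M_1$). Then in $\widehat M$ we have $e\le g(c,\bar b)$, where $g(z,\bar y)$ is the $\emptyset$-definable function returning $\max\{\tau(\bar i,\bar y)\mid\bar i\le z\}$; since $c,\bar b\in M_2$ and $M_2$ is closed under definable functions, $g(c,\bar b)\in M_2$. Hence $M_2$ is cofinal in $M_3$, so $M_2\prec_{cof}M_3$.

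The crux — and the step I expect to be the main obstacle — is showing $M_1\prec_{end}M_3$. Suppose $e=\tau(\bar a,\bar b)\le a^*$ for some $a^*\in M_1$; I must force $e\in M_1$. Choose $c\in M_0$ above $a^*$ and above every coordinate of $\bar a$, so $e<c$. The idea is to package the relevant local behavior of the term into one element: let $r$ be the code, computed in $\widehat M$, of the finite function $\bar i\mapsto\min(\tau(\bar i,\bar b),c+1)$ with domain $\{\bar i\le c\}$. On one hand $r$ is definable in $\widehat M$ from $c$ and $\bar b$, so $r\in M_2$; on the other hand $\pa$ proves $r$ is bounded by an explicit $\emptyset$-definable term in $c$ — the domain has at most $(c+1)^{|\bar a|}$ points and the values are at most $c+1$ — so $r$ lies below an element of $M_0$. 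Since $M_0\prec_{end}M_2$, this pulls $r$ into the initial segment $M_0$, hence $r\in M_0\subseteq M_1$. Now $\bar a\in M_1$ with $\bar a\le c$, so the decoded value $(r)_{\bar a}$ lies in $M_1$; but $(r)_{\bar a}=\min(\tau(\bar a,\bar b),c+1)=e$ because $e<c+1$. Thus $e\in M_1$, as desired. Both hypotheses are genuinely needed here: cofinality of $M_0$ in $M_1$ produces the $M_0$-bound $c$, and the end-extension $M_0\prec_{end}M_2$ is exactly what drags the code $r$ down into $M_0$.
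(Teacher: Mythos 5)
Your proof is correct. The paper does not supply its own proof of this theorem---it only cites \cite{kossak:book}, p.~40---and your argument (amalgamate the two elementary diagrams over the shared $M_0$-constants by compactness, take the Skolem hull of $M_1\cup M_2$, verify cofinality of $M_2$ via the definable maximum over $\{\bar i\le c\}$, and establish $M_1\prec_{end}M_3$ by coding the bounded restriction $\bar i\mapsto\min(\tau(\bar i,\bar b),c+1)$ as an element of $M_2$ that the end-extension hypothesis drags down into $M_0$) is precisely the standard proof found in that source, with all the key steps---including the boundedness of the code $r$ by an $M_0$-element---correctly justified.
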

\begin{theorem}\label{th:amal}
Suppose $M$ is a model of {\rm PA} with a countable nonstandard
elementary initial segment and $\x$ is a Scott set such that ${\rm
SSy}(M)\subseteq \x$. Then for any $A\in\x$, there is an elementary
extension $M\prec N$ such that $A\in {\rm SSy}(N)\subseteq \x$.
\end{theorem}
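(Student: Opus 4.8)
The plan is to push the problem down to the countable initial segment, apply Ehrenfeucht's Lemma (Theorem~\ref{th:ehrenlemma}) there, and glue the resulting extension back onto $M$ using the amalgamation theorem quoted just above. Let $M_0\prec_{end}M$ be the given countable nonstandard elementary initial segment. Since $M_0$ is a submodel of $M$, an element coding a subset of $\n$ in $M_0$ codes the same set in $M$, so ${\rm SSy}(M_0)\subseteq{\rm SSy}(M)\subseteq\x$. As $M_0$ is countable, Ehrenfeucht's Lemma gives an elementary extension $M_0\prec N_0$ with $A\in{\rm SSy}(N_0)\subseteq\x$, and we may take this extension to be cofinal --- either by inspecting the proof of Ehrenfeucht's Lemma, which in fact builds a cofinal extension, or by replacing $N_0$ with the submodel obtained by closing $M_0$ under initial segment (which is a model of {\rm PA} by Gaifman's theorem, cofinally extends $M_0$, and still has $A$ in its standard system once $A$ is represented by a sufficiently small code). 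In particular $N_0$ is countable and nonstandard and $M_0\prec_{cof}N_0$.

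Now apply the amalgamation theorem to $M_0\prec_{cof}N_0$ and $M_0\prec_{end}M$: there is a model $N$ amalgamating $N_0$ and $M$ over $M_0$ with $N_0\prec_{end}N$ and $M\prec_{cof}N$. The second relation gives $M\prec N$, the elementary extension we want. For the standard system I will use the standard fact that a nonstandard elementary initial segment has the same standard system as the ambient model: if $X\subseteq\n$ is coded by $e\in N$ and $c\in N_0$ is nonstandard, then $e\bmod 2^c<2^c\in N_0$ forces $e\bmod 2^c\in N_0$, and this element codes $X$ since $c$ exceeds every standard number. Hence ${\rm SSy}(N)={\rm SSy}(N_0)$, so ${\rm SSy}(N)\subseteq\x$ while $A\in{\rm SSy}(N_0)={\rm SSy}(N)$; thus $A\in{\rm SSy}(N)\subseteq\x$, as required. (That $M$ itself is nonstandard is automatic, since it has a nonstandard initial segment.)

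The whole argument is bookkeeping once the amalgamation theorem is in hand; no forcing axiom, properness, or arithmetic closure of $\x$ is needed. The one point requiring care is the interface between Ehrenfeucht's Lemma and the amalgamation: the extension of $M_0$ fed into the amalgamation must be \emph{cofinal}, so that the amalgamation places $N_0$ --- and with it $A$ --- inside an \emph{end} extension within $N$; that is exactly what lets the initial-segment computation transfer both $A\in{\rm SSy}(N)$ and ${\rm SSy}(N)\subseteq\x$ simultaneously.
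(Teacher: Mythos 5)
Your proposal is correct and follows essentially the same route as the paper: restrict to the countable nonstandard elementary initial segment, apply Ehrenfeucht's Lemma to get a cofinal extension of it containing $A$ in its standard system, amalgamate with $M$ over the initial segment, and conclude via the fact that a nonstandard elementary initial segment has the same standard system as the ambient model. The extra details you supply (why the Ehrenfeucht extension may be taken cofinal, and the coding argument for the equality of standard systems) are points the paper leaves implicit, but the argument is the same.
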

\begin{proof}
Fix a set $A\in \x$. Let $K$ be a countable nonstandard elementary
initial segment of $M$, then ${\rm SSy}(K) = {\rm SSy}(M)$. By
Ehrenfeucht's Lemma (Theorem \ref{th:ehrenlemma}), there is an
extension $K\prec_{cof} K'$ such that $A\in {\rm SSy}(K')\subseteq
\x$.  By Theorem \ref{th:amal}, there is a model $N$, an
amalgamation of $K'$ and $M$ over $K$, such that $K'\prec_{end} N$
and $M\prec_{cof}N$. It follows that ${\rm SSy}(K') = {\rm SSy}(N)$.
Thus, $A\in {\rm SSy}(N)\subseteq \x$.
\end{proof}
\begin{corollary}
The $\kappa$-Ehrenfeucht Principle holds for $\omega_1$-like models
for all cardinals $\kappa$.
\end{corollary}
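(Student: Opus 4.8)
The plan is to reduce the statement to the amalgamation result of Theorem~\ref{th:amal}. If $\kappa\le\omega_1$ there is nothing to prove, since an $\omega_1$-like model has size exactly $\omega_1$ and so is never of size less than $\kappa$; the content is therefore the case $\kappa>\omega_1$. So let $M$ be an $\omega_1$-like model of {\rm PA}, let $\x$ be a Scott set with $\ssy(M)\subseteq\x$, and fix $A\in\x$. By Theorem~\ref{th:amal} it suffices to show that $M$ has a countable nonstandard elementary initial segment; the rest of the plan is devoted to producing one.

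The key structural fact I would extract from $\omega_1$-likeness is that \emph{no countable elementary submodel of $M$ is cofinal in $M$}. Indeed, if $Y\prec M$ were countable and cofinal, then (since $M$ has no largest element) $M=\bigcup_{a\in Y}(M\upharpoonright a)$, a countable union of proper initial segments of the $\omega_1$-like model $M$, each of which is countable; this would force $|M|=\omega$, a contradiction. Consequently every countable $Y\prec M$ is bounded by some $b\in M$, and then $Y\subseteq M\upharpoonright b$, which is a proper initial segment of $M$ and hence again countable. This boundedness fact is exactly what keeps the construction below inside countable sets at every stage.

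Now I would build the desired initial segment by interleaving Skolem hulls (to restore elementarity) with downward closures (to restore being an initial segment). Since $M$ is uncountable it is nonstandard, so fix a nonstandard $c\in M$ and let $Z_0$ be the Skolem hull in $M$ of $\{c\}$, a countable elementary submodel of $M$ containing $c$. Given a countable $Z_n\prec M$, use the previous paragraph to choose $b_n\in M$ bounding $Z_n$, and let $Z_{n+1}$ be the Skolem hull in $M$ of $M\upharpoonright b_n$; this is a countable elementary submodel of $M$ with $Z_n\subseteq M\upharpoonright b_n\subseteq Z_{n+1}$. Setting $K=\bigcup_n Z_n$, we obtain a countable elementary submodel of $M$ as a union of an elementary chain, and $K$ is moreover an initial segment: if $x\in K$ and $y<x$ in $M$, pick $n$ with $x\in Z_n$; then $x<b_n$, so $y<b_n$, hence $y\in M\upharpoonright b_n\subseteq Z_{n+1}\subseteq K$. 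Finally $K$ is nonstandard because $c\in Z_0\subseteq K$. Thus $K\prec_{end} M$ is a countable nonstandard elementary initial segment, and Theorem~\ref{th:amal} now supplies an elementary extension $M\prec N$ with $A\in\ssy(N)\subseteq\x$, as required.

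I expect the only real subtlety to be the simultaneous demand that the submodel $K$ be \emph{both} elementary and an initial segment, which is precisely why a single Skolem hull (which need not be downward closed) or a single initial segment below a point (which need not be elementary) will not do, and why the $\omega$-step alternation is needed. Everything else is routine: the countability of each $Z_n$ follows from $\omega_1$-likeness via the boundedness observation, and the transfer $\ssy(K)=\ssy(M)$ along $K\prec_{end}M$ that Theorem~\ref{th:amal} relies on is standard.
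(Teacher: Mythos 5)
Your proof is correct and follows the same route the paper intends: the corollary is stated as an immediate consequence of Theorem~\ref{th:amal}, the only ingredient left implicit there being that every $\omega_1$-like model has a countable nonstandard elementary initial segment. Your alternating Skolem-hull/downward-closure construction of such a segment is the standard argument and is carried out correctly, including the boundedness observation that keeps each stage countable.
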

These observations suggest that if the $\omega_2$-Ehrenfeucht
Principle fails to hold, one should look to models with an
uncountable standard system for such a counterexample.

\section{Other Applications of $\x/\fin$}
It appears that $\x/\fin$ is a natural poset to use in several
unresolved questions in the field of models of {\rm PA}.  In the
previous sections, I used it to find new conditions for extending
Ehrenfeucht's Lemma and Scott's Problem. Here, I will mention some
other instances in which the poset naturally arises.

\begin{definition}
Let $\lan$ be some language extending the language of arithmetic
$\lan_A$. We say that a model $M$ of $\lan$ satisfies {\rm PA}$^*$
if $M$ satisfies induction axioms in the expanded language. If
$M\models {\rm PA}^*$, then $M\subseteq N$ is a \emph{conservative
extension} if it is a proper extension and every parametrically
definable subset of $N$ when restricted to $M$ is also definable in
$M$.
\end{definition}
Gaifman  showed in \cite{gaifman:types} that for any countable
language $\lan$, every $M\models{\rm PA}^*$ in $\lan$ has a
conservative elementary extension.  A result of George Mills shows
that the statement fails for uncountable languages. Mills proved
that every countable nonstandard model $M\models {\rm PA}^*$ in a
countable language has an expansion to an uncountable language such
that $M\models{\rm PA}^*$ in the expanded language, but has no
conservative elementary extension (see \cite{kossak:book}, p. 168).
His techniques failed for the standard model, leaving open the
question whether there is an expansion of the standard model $\n$ to
some uncountable language that does not have a conservative
elementary extension. This question has recently been answered by
Ali Enayat, who demonstrated that there is always an uncountable
arithmetically closed Scott set $\x$ such that $\langle \n,
A\rangle_{A\in\x}$ has no conservative elementary extension
\cite{enayat:endextensions}. This raises the question of whether we
can say something general about Scott sets $\x$ for which $\langle
\n, A\rangle_{A\in\x}$ has a conservative elementary extension.
\begin{theorem}\label{f:conservative}
Assuming {\rm PFA}, if\/ $\x$ is a proper Scott set of size
$\omega_1$, then $\langle \n, A\rangle_{A\in\x}$ has a conservative
elementary extension.
\end{theorem}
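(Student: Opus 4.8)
The plan is to realize $N$ as an ultrapower of $\n_\x$ in the spirit of the construction in the proof of Theorem~\ref{th:ehren}, using the functions definable in $\n_\x$ together with an ultrafilter on $\x$ chosen generically, via \pfa, so as to decide every sequence coded in $\x$. First note that $\n_\x\models\pa^*$ (the standard model satisfies full induction in any language) and that, since $\x$ is arithmetically closed (Definition~\ref{def:closed}), the subsets of $\n$ definable in $\n_\x$ are exactly the members of $\x$. Hence a proper elementary extension $N\succ\n_\x$ is conservative precisely when, for every $\lan_\x$-formula $\varphi$ and all parameters $\bar b$ from $N$, the trace $\{n\in\n\mid N\models\varphi(n,\bar b)\}$ belongs to $\x$. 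Call $f\colon\n\to\n$ \emph{coded in $\x$} if its graph (as a subset of $\n$ via pairing) lies in $\x$; by arithmetic closure this is equivalent to $f$ being definable in $\n_\x$. Again by arithmetic closure, whenever $f_1,\dots,f_k$ are coded in $\x$ and $\psi$ is an $\lan_\x$-formula, the set $\{n\mid\n_\x\models\psi(f_1(n),\dots,f_k(n))\}$ lies in $\x$, and the function sending $n$ to the least $m$ with $\n_\x\models\psi(f_1(n),\dots,f_k(n),m)$ (or to $0$ if there is none) is coded in $\x$.

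Next I would apply \pfa. Consider the poset $\x/\fin$, which is proper because $\x$ is a proper Scott set, together with the family of dense sets consisting of: for each $B\in\x$ coding a sequence $\la B_n\mid n\in\n\ra$, the set $\mathscr D_B=\{C\in\x/\fin\mid C\text{ decides }\la B_n\mid n\in\n\ra\}$, which is dense because $\x$ is decisive (Lemma~\ref{th:decisive}); and, for each $k\in\n$, the dense set $\{C\in\x/\fin\mid k\notin C\}$. As $|\x|=\omega_1$, this family has size $\omega_1$, so \pfa\ provides a filter $G$ meeting all of its members. Extend $G$ to an ultrafilter $U$ on the Boolean algebra $\x$. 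Then $U$ is nonprincipal, and for every sequence $\la B_n\mid n\in\n\ra$ coded in $\x$ one has $\{n\in\n\mid B_n\in U\}\in\x$, witnessed by any $C\in G\cap\mathscr D_B$.

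Now form $N:=\Pi_\x\n_\x/U$, the structure of $\sim_U$-classes $[f]_U$ of functions $f$ coded in $\x$, where $f\sim_U g$ iff $\{n\mid f(n)=g(n)\}\in U$ and the $\lan_\x$-structure is interpreted coordinatewise modulo $U$; all of this is well-defined because the relevant traces belong to $\x$ (closure under relative computability suffices here) and so are decided by $U$. One then proves the \L o\'{s} Lemma for $N$ by the usual induction on formulas, the only nonroutine case being the existential quantifier, handled by the coded least-witness functions above. Consequently $N$ is an elementary extension of $\n_\x$ (embedded via constant functions, which, being arithmetic, are coded in $\x$), and the extension is proper since $[\mathrm{id}]_U$ differs from every constant, $U$ being nonprincipal. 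For conservativity, fix $\varphi$ and parameters $[f_1]_U,\dots,[f_k]_U$ and put $B_n=\{m\mid\n_\x\models\varphi(n,f_1(m),\dots,f_k(m))\}$; by \L o\'{s}, $N\models\varphi(n,[f_1]_U,\dots,[f_k]_U)$ iff $B_n\in U$. The sequence $\la B_n\mid n\in\n\ra$ is coded in $\x$ by arithmetic closure, so $\{n\mid B_n\in U\}\in\x$, i.e.\ the trace on $\n$ of this parametrically definable subset of $N$ is definable in $\n_\x$. Thus $N$ is a conservative elementary extension of $\n_\x$.

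I expect the main difficulty to be organizational rather than a single hard step: one must recognize that the correct ultrapower uses exactly the $\x$-coded functions, and that well-definedness, the \L o\'{s} induction, and conservativity all reduce to arithmetic closure, while the generic ultrafilter need only decide the $|\x|=\omega_1$ many sequences coded in $\x$---precisely the budget \pfa\ affords. This also accounts for the hypothesis $|\x|=\omega_1$: for a proper Scott set of size $\omega_2$ the construction would require a filter meeting $\omega_2$ dense subsets of $\x/\fin$, which is beyond the reach of \pfa.
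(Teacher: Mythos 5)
Your proposal is correct and follows essentially the same route as the paper: an ultrapower of $\n_\x$ by the functions coded in $\x$, with \pfa{} applied to the proper poset $\x/\fin$ to meet the $\omega_1$ many dense sets of conditions deciding the coded sequences $\la B_n\mid n\in\n\ra$, so that every parametrically definable trace lands in $\x$ by decisiveness (Lemma~\ref{th:decisive}). The only cosmetic differences are that you index the dense sets by the coding sets $B$ rather than by pairs $(\varphi,f)$, and that you explicitly add dense sets forcing the ultrafilter to be nonprincipal so the extension is proper --- a detail the paper leaves implicit.
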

\begin{proof}
Let $\lan_\x$ be the language of arithmetic $\lan_A$ together with
unary predicates for sets in $\x$. Let $G$ be an ultrafilter on
$\x$. We define $\Pi_\x\n/G$, the ultrapower of $\n$ by $G$, to
consist of equivalence classes of functions coded in $\x$. We have
to make this modification to the construction of the proof of
Theorem \ref{th:ehren} since the idea of functions coded in the
model clearly does not make sense for $\n$. The usual arguments show
that we can impose an $\lan_\x$ structure on $\Pi_\x\n/G$ and \L
o\'{s} Lemma holds. I will show, by choosing $G$ carefully, that
$\la \Pi_\x\n/G, A'\ra_{A\in \x}$ is a conservative extension of
$\langle \n, A\rangle_{A\in\x}$ where $A'=\{[f]_G\in \Pi_\x\n/G\mid
\{n\in\n\mid f(n)\in A\}\in G\}$. Fix a set $E$ definable in
$\la\Pi_\x\n/G, A'\ra_{A\in\x}$ by a formula $\varphi(x,[f]_G)$.
Observe that $n\in E\leftrightarrow \Pi_\x\n/G\models
\varphi(n,[f]_G)\leftrightarrow B_n^{\varphi,f}=\{m\in \n\mid
\n\models \varphi(n,f(m))\}\in G$. Let $\mathscr
D_{\varphi,f}=\break\{C\in \x/\fin\mid C \text{ decides }\langle
B_n^{\varphi, f}\mid n\in\n\rangle\}$.  The sets $\mathscr
D_{\varphi, f}$ are dense since $\x$ is decisive. Clearly if $G$
meets all the $\mathscr D_{\varphi,f}$, the ultrapower $\la
\Pi_\x\n/G, A'\ra_{A\in\x}$ will be a conservative extension of
$\langle \n, A\rangle_{A\in\x}$. Finally, since $\x$ has size
$\omega_1$, there are at most $\omega_1$ many formulas $\varphi$ of
$\lan_\x$ and functions $f$ coded in $\x$, and hence at most
$\omega_1$ many dense sets $\mathscr D_{\varphi,f}$. So we can find
the desired $G$ by {\rm PFA}.\footnote{The anonymous referee pointed
out that similar arguments have appeared in \cite{blass:pa}.}
\end{proof}

Another open question in the field of models of {\rm PA}, for which
$\x/\fin$ is relevant, involves the existence of minimal cofinal
extensions for uncountable models.
\begin{definition}
Let $M$ be a model of {\rm PA}, then $M\prec N$ is a \emph{minimal
extension} if it is a proper extension and whenever $M\prec K\prec
N$, either $K=M$ or $K=N$.
\end{definition}
\begin{theorem}
Every nonstandard countable model of {\rm PA} has a minimal cofinal
extension. \emph{(See \cite{kossak:book}, p. 28)}
\end{theorem}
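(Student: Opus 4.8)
The plan is to realize a minimal cofinal extension of a countable nonstandard $M\models\pa$ as an ultrapower $\Pi M/U$, in the spirit of the Kirby--Paris construction of Section~\ref{sec:proof} (\cite{kirby:ults}) but carried out internally to $M$. Fix a nonstandard $a\in M$, let $\mathrm{Def}_a(M)$ be the Boolean algebra of parametrically definable subsets of the interval $[0,a)$ of $M$, let $U$ be an ultrafilter on $\mathrm{Def}_a(M)$, and form $\Pi M/U$ out of the equivalence classes $[f]_U$ of definable functions $f\colon[0,a)\to M$ that are \emph{bounded mod $U$}, i.e.\ $\{x\in[0,a)\mid f(x)<m\}\in U$ for some $m\in M$. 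First I would verify the routine facts: a structure in the language of arithmetic is well defined on these classes, \L o\'s's Lemma holds (so $M\prec\Pi M/U$ via the constant functions $c_m$), boundedness of the functions forces the extension to be cofinal (each $[f]_U$ lies below its witnessing $c_m$), and if $U$ is nonprincipal then $[\mathrm{id}]_U$ is a nonstandard element below $c_a$, so the extension is proper.

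Second, I would isolate the property of $U$ responsible for minimality: call $U$ \emph{selective} if for every definable $f\colon[0,a)\to M$ there is $D\in U$ with $f\upharpoonright D$ either constant or injective, and show that if $U$ is selective then $\Pi M/U$ is a minimal extension of $M$. Indeed, suppose $M\prec K\prec\Pi M/U$ with $K\neq M$ and pick $[f]_U\in K\setminus M$. Then $f$ is equal mod $U$ to no $c_b$, hence not constant mod $U$, so by selectivity $f$ is injective on some $D\in U$; let $g$ be the definable partial inverse of $f\upharpoonright D$. For any $[h]_U\in\Pi M/U$ we have $(h\circ g)(f(x))=h(x)$ for $x\in D\in U$, so $[h]_U$ is the value of the $M$-definable function $h\circ g$ at $[f]_U$; since $K$ is an elementary submodel containing $M$ and $[f]_U$, it is closed under $M$-definable functions applied to $[f]_U$, and therefore $[h]_U\in K$. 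Hence $K=\Pi M/U$, which is the minimality condition. (The converse implication, that a minimal $\Pi M/U$ must come from a selective $U$, is not needed for the existence statement.)

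Third --- and this is where countability of $M$ enters --- I would build a selective nonprincipal $U$ by a diagonalization over the \emph{countable} set $\mathrm{Def}_a(M)$. Enumerate all definable subsets $\langle D_n\mid n\in\n\rangle$ of $[0,a)$ and all definable functions $\langle f_n\mid n\in\n\rangle$ from $[0,a)$ to $M$, and construct a descending sequence $[0,a)=A_0\supseteq A_1\supseteq\cdots$ of definable subsets of nonstandard size: at stage $2n$ let $A_{2n+1}$ be whichever of $A_{2n}\cap D_n$ and $A_{2n}\setminus D_n$ has nonstandard size (at least one does, since a union of two standard-finite sets is standard-finite); at stage $2n+1$ shrink $A_{2n+1}$ to a definable $A_{2n+2}$ of nonstandard size on which $f_n$ is constant or injective. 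Then $U=\{D\in\mathrm{Def}_a(M)\mid A_n\subseteq D\text{ for some }n\}$ is a nonprincipal ultrafilter on $\mathrm{Def}_a(M)$ (nonprincipal because every $A_n$ has nonstandardly many elements, an ultrafilter because every $D_n$ is decided at stage $2n$) and is selective by construction, finishing the proof.

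The one genuinely nontrivial point --- the main obstacle --- is the odd stage: given a definable $A\subseteq[0,a)$ of nonstandard size and a definable $f$, produce a definable $B\subseteq A$ of nonstandard size with $f\upharpoonright B$ constant or injective. This is a pigeonhole argument carried out \emph{inside} $M$, using that $M\models\pa$: working in $M$, apply the least-number principle to choose a definable $B_0\subseteq A$ of maximal size $c$ on which $f$ is injective. If $c$ is nonstandard we are in the injective case with $B=B_0$; if $c$ is standard then, by maximality of $B_0$, $f\upharpoonright A$ takes at most $c$ values, so some fiber $\{x\in A\mid f(x)=y\}$ has size at least $|A|/c$, which is nonstandard, giving the constant case. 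All of this is definable because $\pa$ proves the requisite counting and extremal principles; once it is set up, the remainder of the argument is bookkeeping.
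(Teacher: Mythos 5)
Your proof is correct, and it is worth noting that the paper itself does not prove this theorem --- it only cites Kossak--Schmerl --- though it effectively contains a proof of a stronger statement via a different route. The paper's route (the lemma on Ramsey ultrafilters together with the remark after Theorem \ref{th:ramseyult}) builds the minimal cofinal extension as an ultrapower $\Pi_{\ssy(M)}M/U$ by a Ramsey ultrafilter on the \emph{standard system}, using functions $f:\n\to M$ coded in $M$; a Ramsey ultrafilter is obtained by meeting the countably many dense sets $\mathscr D_f$ whenever $\ssy(M)$ is countable, which yields the conclusion for \emph{any} nonstandard model with countable standard system, not just countable models. Your construction instead works entirely internally: an ultrafilter on the countable Boolean algebra of definable subsets of an interval $[0,a)$, an ultrapower by definable functions, and a diagonalization over the countably many definable sets and functions of $M$ --- this is the classical argument and is tied to countability of $M$ itself (since $\mathrm{Def}_a(M)$ has the cardinality of $M$), but it avoids standard systems and Scott sets altogether. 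Both proofs turn on the same dichotomy (every relevant function is constant or injective on a set in the ultrafilter, so any new element generates $[\mathrm{id}]_U$ via a definable partial inverse), and your pigeonhole argument for the odd stages --- maximal injective subset of standard size forces a fiber of nonstandard size --- is the right way to make the dichotomy dense; the only points to spell out carefully are that bounded definable sets are coded so that ``maximal $c$'' and the internal cardinality make sense, and that the witnessing sets $B$ can be chosen definably (e.g.\ by taking least codes), both of which are routine in $\pa$.
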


Gaifman showed that every model of {\rm PA}, regardless of
cardinality, has a minimal end extension \cite{gaifman:types}.
\begin{definition}
Let $\x\subseteq \power(\n)$ be a Boolean algebra.  If $U$ is an
ultrafilter on $\x$, we say that $U$ is \emph{Ramsey} if for every
function $f:\n\to\n$ coded in $\x$, there is a set $A\in U$ such
that $f$ is either $1\text{-1}$ or constant on $A$.
\end{definition}
\begin{lemma} If $M$ is a nonstandard model of {\rm PA} such that
${\rm SSy}(M)$ has a Ramsey ultrafilter, then $M$ has a minimal
cofinal extension.\footnote{This was first proved by
\cite{blass:pa}.}
\end{lemma}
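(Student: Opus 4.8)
The plan is to run the Kirby--Paris ultrapower of Theorem~\ref{th:ehren} with a Ramsey ultrafilter playing the role of the ``generic'' object, and then to exploit the Ramsey property to show the resulting extension is minimal: the point is that a Ramsey ultrafilter lets one invert coded functions on a set in the ultrafilter.

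So let $U$ be a (nonprincipal) Ramsey ultrafilter on $\ssy(M)$ and set $N:=\Pi_{\ssy(M)} M/U$, the ultrapower built from functions $f\colon\n\to M$ coded in $M$, with the $\lan_A$-structure for which \L o\'{s}'s Lemma holds, so $M\prec N$. First I would record the easy structural facts. (i) $N$ is a \emph{cofinal} extension: if $f$ is coded by $a\in M$, then in a reasonable coding each value $f(n)$ lies below $a$, so $[f]_U$ lies below the image of $a$. (ii) $N$ is a \emph{proper} extension: since $U$ is nonprincipal, the function $\nu_0\colon n\mapsto n$ (coded in $M$) is not $\sim_U$ any constant, so $\nu:=[\nu_0]_U\notin M$. (iii) $N=\mathrm{dcl}^N(M\cup\{\nu\})$: every $[g]_U$ equals $G^N(\nu)$, where $G$ is the function coded in $M$ with $G(n)=g(n)$ for standard $n$, which is just \L o\'{s}'s Lemma applied to the coded (definable with a parameter) function $G$.

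For minimality, suppose $M\prec K\prec N$ with $K\neq M$ and fix $\xi=[f]_U\in K\setminus M$; I would show $\nu\in\mathrm{dcl}^N(M\cup\{\xi\})\subseteq K$, which by (iii) gives $K=N$. Step one is to reduce to an $\n$-valued function so that the Ramsey property applies: put $h(n):=\min\{m\mid f(m)=f(n)\}$, which is coded in $M$ and, since $h(n)\le n$, takes standard values, so $h\colon\n\to\n$ is coded in $\ssy(M)$. As $h$ and $f$ have the same $U$-fibres, $\xi$ and $[h]_U$ are interdefinable over $M$ in $N$; in particular $[h]_U\notin M$. Now apply the Ramsey property to $h$: there is $A\in U$ with $h\upharpoonright A$ constant or $1$-$1$. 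If $h\upharpoonright A$ were constant, \L o\'{s}'s Lemma would give $[h]_U\in M$, a contradiction, so $h$ is injective on $A$. Let $g(y):=\min\{n\in A\mid h(n)=y\}$ (and $g(y):=0$ if there is no such $n$). The graph of $g$ is $\Delta_0$ in $A\oplus h$ --- the search for the minimum is bounded by $n$ --- so $g$ is coded in $\ssy(M)$, and by injectivity of $h$ on $A$ we get $g(h(n))=n$ for every $n\in A$. Since $A\in U$, \L o\'{s}'s Lemma gives $[\,n\mapsto g(h(n))\,]_U=\nu$, i.e.\ $\nu=g^N([h]_U)\in\mathrm{dcl}^N(M\cup\{\xi\})$. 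Hence $K=N$, and $N$ is a minimal cofinal extension of $M$.

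The hard part is the bookkeeping in the last paragraph: one must verify that the auxiliary functions $h$ and $g$ genuinely lie in $\ssy(M)$ --- for $g$ this rests on the bounded-search observation making its graph $\Delta_0$, rather than merely r.e., relative to $A\oplus h$ --- and that evaluation of such coded functions inside $N$ commutes with the ultrapower in the way \L o\'{s}'s Lemma licenses. Cofinality, properness, and the fact that $\nu$ generates $N$ over $M$ are routine once the ultrapower machinery of Theorem~\ref{th:ehren} is available.
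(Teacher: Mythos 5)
Your argument is correct and is essentially the paper's proof: the same Kirby--Paris ultrapower $\Pi_{\ssy(M)}M/U$, the same observation that $[id]_U$ generates the ultrapower over $M$, the same replacement of $f$ by the $\n$-valued function $h(n)=\min\{m\mid f(m)=f(n)\}$ (this is exactly the function the paper calls $g$), and the same use of the Ramsey property to get injectivity on some $A\in U$ and then invert. The one flawed step is precisely the one you flag as load-bearing: the \emph{totalized} inverse $g(y)=\min\{n\in A\mid h(n)=y\}$ with default value $0$ does \emph{not} have $\Delta_0$ (or even computable) graph relative to $A\oplus h$ --- verifying that a given $n$ is the least witness is a bounded search, but verifying that \emph{no} witness exists (the default clause) is a genuinely unbounded $\Pi_1$ condition, so membership of $g$ in the Scott set $\ssy(M)$ does not follow from closure under relative computability. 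This is easily repaired in either of two ways: restrict $g$ to the partial inverse (whose graph $\{(y,n)\mid n\in A,\ h(n)=y,\ \forall m<n\,(m\notin A\vee h(m)\neq y)\}$ \emph{is} computable from $A\oplus h$, and whose domain contains $h[A]$, which suffices since $A\in U$), or do what the paper does and take the inverse to be a Skolem term of $M$ with the codes of $f$ and $A$ as parameters, where definability in $M$ costs nothing and no effectiveness is needed. With that one-line fix your proof goes through.
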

\begin{proof}
Let $U$ be a Ramsey ultrafilter on ${\rm SSy}(M)$. The strategy will
be to show that the ultrapower $\Pi_{{\rm SSy}(M)}M/U$ is a minimal
cofinal extension of $M$. The meaning of $\Pi_{{\rm SSy}(M)}M/U$
here is identical to the one in the proof of Theorem \ref{th:ehren}.
First, observe that for any ultrafilter U, we have $\Pi_\x
M/U=Scl(M\cup\{[id]_U\})$, the Skolem closure of the equivalence
class of the identity function together with elements of $M$. This
holds since any $[f]_U=t([id]_U)$ where $t$ is the Skolem term
defined by $f$ in $M$. Next, observe that such ultrapowers are
always cofinal. To see this, fix $[f]_U\in\Pi_\x M/U$ and let
$a>f(n)$ for all $n\in\n$. Such $a$ exists since $f$ is coded in
$M$. Clearly $[f]_U<[c_a]_U$ where $c_a(n)=a$ for all $n\in\n$.
These observations hold for any Scott set $\x\supseteq {\rm SSy}(M)$
and, in particular, for $\x={\rm SSy}(M)$. To show that the
extension $\Pi_{{\rm SSy}(M)}M/U$ is minimal, we fix $M\prec K\prec
\Pi_\x M/U$ and show that $K=M$ or $K=\Pi_\x M/U$. It suffices to
see that $[id]_U\in Scl(M\cup\{[f]_U\})$ for every $[f]_U\in
(\Pi_{{\rm SSy}(M)} M/U)- M$. Fix $f:\n\to M$ and define $g:\n\to\n$
such that $g(0)=0$ and $g(n)=n$ if $f(n)$ is not equal to $f(m)$ for
any $m<n$, or $g(n)=m$ where $m$ is least such that $f(m)=f(n)$.
Observe that $g\in {\rm SSy}(M)$. Also for any $A\subseteq \n$, the
function $g$ is $1\text{-}1$ or constant on $A$ if and only if $f$
is. Since $U$ is Ramsey, $g$ is either constant or $1\text{-}1$ on
some set $A\in U$. Hence $f$ is either constant or $1\text{-}1$ on
$A$ as well. If $f$ is constant on $A$, then $[f]_U\in M$.  If $f$
is $1\text{-}1$ on $A$, let $s$ be the Skolem term that is the
inverse of $f$ on $A$. Then clearly $s([f]_U)=[id]_U$. This
completes the argument that $\Pi_{{\rm SSy}(M)} M/U$ is a minimal
cofinal extension of $M$.
\end{proof}
The converse to the above theorem does not hold. If $M$ has a
minimal cofinal extension, it does not follow that there is a Ramsey
ultrafilter on ${\rm SSy}(M)$.\footnote{I am grateful to Haim
Gaifman for pointing this out, see \cite{gitman:thesis} for a
detailed argument.}
\begin{theorem}\label{th:ramseyult}
Assuming {\rm PFA}, Ramsey ultrafilters exist for proper Scott sets
of size $\omega_1$. Thus, if  $M$ is a model of {\rm PA} and ${\rm
SSy}(M)$ is proper of size $\omega_1$, then $M$ has a minimal
cofinal extension.
\end{theorem}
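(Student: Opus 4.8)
The plan is to build the Ramsey ultrafilter $U$ on $\x$ by a $\pfa$-style argument, meeting $\omega_1$ many dense subsets of $\x/\fin$ — one family that forces genericity enough to make $U$ an ultrafilter, and one family that handles the Ramsey condition. Since $\x$ is proper, $\x/\fin$ is a proper poset, so $\pfa$ applies as soon as I exhibit the right $\omega_1$ many dense sets. The second conclusion is then immediate from the preceding lemma: if $\ssy(M)$ is proper of size $\omega_1$, the first part produces a Ramsey ultrafilter on $\ssy(M)=\x$, and the lemma gives a minimal cofinal extension $\Pi_{\ssy(M)}M/U$.

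First I would list the dense sets. For each $A\in\x$, let $\mathscr E_A=\{C\in\x/\fin\mid C\subseteq_\fin A \text{ or } C\subseteq_\fin(\n-A)\}$; each $\mathscr E_A$ is dense because any infinite $B\in\x$ splits as $(B\cap A)\cup(B\cap(\n-A))$ with at least one piece infinite and in $\x$ (both are computable from $B$ and $A$, hence in $\x$), and that piece lies below $B$ in $\mathscr E_A$. A filter meeting every $\mathscr E_A$, extended to an ultrafilter on $\x$ in the trivial way, is already an ultrafilter — in fact it decides every $A\in\x$ outright. Since $|\x|=\omega_1$, this is an $\omega_1$-sized family. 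Next, for each $f:\n\to\n$ coded in $\x$, let $\mathscr R_f=\{C\in\x/\fin\mid f\restriction C \text{ is constant or } 1\text{-}1\}$. The key point is that $\mathscr R_f$ is dense: given infinite $B\in\x$, apply the infinite pigeonhole / Ramsey-type splitting to $f\restriction B$ — either some fiber $f^{-1}(\{k\})\cap B$ is infinite, in which case it is computable from $f$ and $B$, hence in $\x$, and lies below $B$ with $f$ constant on it; or every fiber meeting $B$ is finite, in which case one can recursively (hence arithmetically in $f$ and $B$, so inside the arithmetically closed $\x$) thin $B$ to an infinite $C\in\x$ on which $f$ is injective, by always choosing the least element of $B$ above the previous one whose $f$-value has not yet been used. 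There are at most $|\x|=\omega_1$ functions $f$ coded in $\x$, so $\{\mathscr R_f\}$ is again an $\omega_1$-sized family.

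Now apply $\pfa$ to $\x/\fin$ and the combined family $\{\mathscr E_A\mid A\in\x\}\cup\{\mathscr R_f\mid f \text{ coded in }\x\}$ — in total at most $\omega_1$ dense sets — to obtain a filter $G$ on $\x/\fin$ meeting all of them. As noted in Section \ref{sec:proof}, $G$ generates an ultrafilter $U$ on the Boolean algebra $\x$; meeting every $\mathscr E_A$ guarantees $U$ is a genuine ultrafilter on $\x$ (each $A$ or its complement is in $U$). For any $f:\n\to\n$ coded in $\x$, pick $C\in G\cap\mathscr R_f$; then $C\in U$ and $f\restriction C$ is constant or $1$-$1$, which is exactly the Ramsey property. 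Finally, invoking the lemma on Ramsey ultrafilters and minimal cofinal extensions with $\x=\ssy(M)$ yields the minimal cofinal extension of $M$.

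The main obstacle is the density of $\mathscr R_f$ — specifically, checking that in the "all fibers finite" case the injective thinning $C$ of $B$ actually lies in $\x$. This is where arithmetic closure (equivalently, closure under the Turing jump) is essential: the definition of $C$ requires, at stage $n+1$, knowing the finite set of $f$-values already used, which is a bounded search but one whose halting depends on $f$ and $B$; the whole sequence defining $C$ is arithmetic in $f\oplus B$, so $C\in\x$. (Properness of $\x/\fin$ is a hypothesis, not something to verify here, so the genericity machinery is free.) One should also note in passing that, as with Theorem \ref{th:ehren}, there is no circularity: the $\mathscr R_f$ range over functions coded in $\x$, of which there are only $\omega_1$, independently of the sizes of any models involved.
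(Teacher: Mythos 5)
Your proposal is correct and follows essentially the same route as the paper: the paper's entire proof consists of observing that the sets $\mathscr D_f=\{A\in {\rm SSy}(M)/\fin\mid f \text{ is 1-1 or constant on }A\}$ are dense and applying {\rm PFA} to this $\omega_1$-sized family (your extra sets $\mathscr E_A$ are harmless but not needed, since any filter on $\x/\fin$ generates a filter on the Boolean algebra $\x$ that can be extended to an ultrafilter containing all the chosen conditions). One correction to your closing remark: arithmetic closure is \emph{not} what makes $\mathscr R_f$ dense --- in the all-fibers-finite case your greedy thinning of $B$ is a total recursive procedure relative to the oracle $f\oplus B$ (each search terminates because a finite union of finite fibers cannot exhaust the infinite set $B$), so $C$ is Turing computable from $f\oplus B$ and lies in $\x$ by the basic closure property of Scott sets; the paper explicitly points out that the density of these sets does not require arithmetic closure, which is what underlies its subsequent remark about models with countable standard system.
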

\begin{proof}
The existence of a Ramsey ultrafilter involves being able to meet a
family of dense sets.  To see this, fix $f:\n\to\n$ and observe that
$\mathscr D_f=\{A\in {\rm SSy}(M)/\fin\mid f \text{ is 1-1 on
}A\text{ or }f\text{ is constant on }A\}$ is dense. To see that
$\mathscr D_f$ is dense, actually does not require that ${\rm
SSy}(M)$ is arithmetically closed.
\end{proof}
The proof of Theorem \ref{th:ramseyult} shows that any $M$ with a
countable standard system will have a minimal cofinal extension
since we do not need {\rm PFA} to construct an ultrafilter meeting
countably many dense sets.

\section{Weakening the Hypothesis}
There are several ways in which the hypothesis of the main theorem
can be modified. {\rm PFA} is a very strong set theoretic axiom, and
therefore it is important to see whether this assumption can be
weakened to something that is lower in consistency strength. In
fact, there are weaker versions of {\rm PFA} that still work with
the main theorem. It is also possible to make slightly different
assumptions on $\x$. Instead of assuming that $\x$ is proper, it is
sufficient to assume that $\x$ is the union of a chain of proper
Scott sets.

The definition of properness refers to countable structures $M\prec
H_\lambda$ and the existence of $M$-generic elements for them.  If
we fix a cardinal $\kappa$ and modify the definition to consider $M$
of size $\kappa$ instead, we will get the notion of
$\kappa$\emph{-properness}. In this extended definition, the notion
of properness we considered up to this point becomes
$\aleph_0$\emph{-properness}. For example, the $\kappa$-c.c.\ and
$<\kappa$-closed posets are $\kappa$-proper. Hamkins and Johnstone
\cite{johnstone:pfa} recently proposed a new axiom ${\rm
PFA}(\mathfrak{c}\text{-proper})$ which states that for every poset
$\p$ that is proper and $2^\omega$-proper and every collection
$\mathcal D$ of $\omega_1$ many dense subsets of $\p$, there is a
filter on $\p$ that meets all of them. ${\rm
PFA}(\mathfrak{c}\text{-proper})$ is much weaker in consistency
strength than {\rm PFA}. While the best large cardinal upper bound
on the consistency strength of {\rm PFA} is a supercompact cardinal,
an upper bound for ${\rm PFA}(\mathfrak{c}\text{-proper})$ is an
unfoldable cardinal \cite{johnstone:pfa}. Unfoldable cardinals were
defined by Villaveces \cite{villaveces:unfoldable} and are much
weaker than measurable cardinals.  In fact, unfoldable cardinals are
consistent with $V=L$. The axiom ${\rm
PFA}(\mathfrak{c}\text{-proper})$ also decides the size of the
continuum is $\omega_2$ \cite{johnstone:pfa}. It is enough for the
main theorem to assume that ${\rm PFA}(\mathfrak{c}\text{-proper})$
holds:
\begin{theorem}
Assuming ${\rm PFA}(\mathfrak{c}\text{-proper})$, every proper Scott
set is the standard system of a model of {\rm PA}.
\end{theorem}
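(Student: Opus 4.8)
The plan is to mirror the proof of the Main Theorem almost verbatim, tracing through which instances of full {\rm PFA} are actually invoked and checking that each such poset is not only proper but also $2^\omega$-proper, so that ${\rm PFA}(\mathfrak{c}\text{-proper})$ suffices. First I would recall that, since ${\rm PFA}(\mathfrak{c}\text{-proper})$ also decides $2^\omega=\omega_2$ \cite{johnstone:pfa}, the reduction in the proof of the Main Theorem still applies: Scott sets of size $\omega_1$ are handled by Knight and Nadel (Theorem \ref{th:scott_omega_1}), so only Scott sets $\x$ of size $\omega_2=2^\omega$ need attention. For such an $\x$, by Theorem \ref{th:ehrenprin} it is enough to verify the $\omega_2$-Ehrenfeucht Principle for $\x$, i.e.\ to re-run the proof of Theorem \ref{th:ehren}. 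That proof builds a model $M\models{\rm PA}$ of size $\omega_1$ with ${\rm SSy}(M)\subseteq\x$, forms the family $\mathcal D=\{\mathscr D_f\mid f:\n\to M\text{ coded in }M\}$ of $\omega_1$ many dense subsets of $\x/\fin$ (together with the single set $B$ from Lemma \ref{l:add} forcing the target set $A$ into the ultrapower), and applies {\rm PFA} to $\x/\fin$ to obtain an ultrafilter $G$ meeting all of them. Everything about this construction is unchanged except the one application of the forcing axiom to the poset $\p=\x/\fin$.

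So the heart of the matter is: ${\rm PFA}(\mathfrak{c}\text{-proper})$ applies to $\p=\x/\fin$ provided $\p$ is \emph{both} proper and $2^\omega$-proper, and we are always assuming $\x$ is a proper Scott set, which gives properness ($=\aleph_0$-properness). Thus the plan reduces to establishing that a proper Scott set $\x$ of size $2^\omega$ has the additional property that $\x/\fin$ is $2^\omega$-proper. Here I would use the size bound: since $|\x/\fin|\le 2^\omega$, a structure $M\prec H_\lambda$ of size $2^\omega$ containing $\p$ can already contain all of $\p$ and all of its dense (equivalently, all of its maximal antichains) as subsets. In that situation, $M$-genericity of a condition $q$ is automatic: any $V$-generic filter through $q$ meets every maximal antichain of $\p$, and if such an antichain lies entirely inside $M$ then the meeting point lies in $M$ as well, so $G\cap A\cap M\neq\emptyset$. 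Hence \emph{every} condition of $\p$ is $M$-generic for any $M\prec H_\lambda$ of size $\ge|\p|$ containing $\p$ as a subset, which is exactly $2^\omega$-properness when $|\p|\le 2^\omega$. This is the same triviality that makes $\kappa$-c.c.\ posets $\kappa$-proper, specialized to $\kappa=2^\omega$ and $|\p|\le 2^\omega$.

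With that observation in hand, the argument closes: $\x/\fin$ is proper (by hypothesis on $\x$) and $2^\omega$-proper (by the size count just given), so ${\rm PFA}(\mathfrak{c}\text{-proper})$ produces the desired ultrafilter $G$ meeting the $\omega_1$ many dense sets $\mathscr D_f$ and containing $B$; the remainder of the proof of Theorem \ref{th:ehren} — \L o\'{s}'s Lemma, Lemma \ref{l:add}, and the decisiveness of $\x$ (Lemma \ref{th:decisive}) forcing ${\rm SSy}(\Pi_\x M/G)\subseteq\x$ — goes through word for word, yielding $M\prec N=\Pi_\x M/G$ with $A\in{\rm SSy}(N)\subseteq\x$. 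Combined with Theorem \ref{th:ehrenprin}, every proper Scott set of size $\omega_2$ is a standard system, and together with Knight--Nadel this covers all proper Scott sets. The main obstacle — really the only non-bookkeeping point — is the verification that $\x/\fin$ is $2^\omega$-proper; but as indicated this is nearly immediate once one notes $|\x/\fin|\le 2^\omega$ and unwinds the definitions, so in fact the whole theorem is a routine strengthening-of-hypothesis argument rather than a new construction.
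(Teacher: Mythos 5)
Your proposal is correct and follows essentially the same route as the paper: the paper simply notes that every Scott set is $(2^\omega)^+$-c.c.\ and hence $2^\omega$-proper, which is exactly the size-counting observation you unwind by hand (your direct verification implicitly relies on its sufficing to find generic conditions for the club of $M\prec H_\lambda$ of size $2^\omega$ with $\x/\fin\subseteq M$, a reduction the paper records earlier). The surrounding bookkeeping --- $2^\omega=\omega_2$ under ${\rm PFA}(\mathfrak{c}\text{-proper})$, Knight--Nadel for size $\omega_1$, and rerunning Theorem \ref{th:ehren} --- matches what the paper leaves implicit in its one-line conclusion that the axiom ``applies to proper Scott sets.''
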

\begin{proof}
Every $\kappa^+$-c.c.\ poset is $\kappa$-proper. It is clear that
every Scott set $\x$ is $(2^\omega)^+$-c.c.. It follows that every
Scott set is $2^\omega$-proper. Thus, ${\rm
PFA}(\mathfrak{c}$-$\text{proper})$ applies to proper Scott sets.
\end{proof}
It is also easy to see that we do not need the whole Scott set $\x$
to be proper.  For the construction, it would suffice if $\x$ was a
union of a chain of proper Scott sets. Call a Scott set
\emph{piecewise proper} if it is the union of a chain of proper
Scott sets of size $\leq\omega_1$. Under this definition, any
arithmetically closed Scott set of size $\leq\omega_1$ is trivially
piecewise proper since it is the union of a chain of arithmetically
closed countable Scott sets. Also, it is clear that a piecewise
proper Scott set is arithmetically closed. The modified construction
using piecewise proper Scott sets does not require all of {\rm PFA}
but only a much weaker version known as ${\rm PFA}^-$. The axiom
${\rm PFA}^-$ is the assertion that for every proper poset $\p$ of
\emph{size} $\omega_1$ and every collection $\mathcal D$ of
$\omega_1$ many dense subsets of $\p$, there is a filter on $\p$
that meets all of them. ${\rm PFA}^-$ has no large cardinal
strength. The axiom is equiconsistent with {\rm ZFC}
\cite{shelah:proper} (p. 122). This leads to the following modified
version of the main theorem:
\begin{theorem}\label{th:mainbpfa}
Assuming ${\rm PFA}^-$, every piecewise proper Scott set of size
$\leq\omega_2$ is the standard system of a model of {\rm PA}.
\end{theorem}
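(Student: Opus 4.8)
The plan is to mimic the $\omega_1$-step elementary chain construction from the proof of Theorem \ref{th:scott_omega_1}, but now running it through the chain of proper pieces that witnesses piecewise properness, and invoking ${\rm PFA}^-$ in place of full {\rm PFA} at each step where an ultrafilter must be produced. Let $\x$ be a piecewise proper Scott set of size $\leq\omega_2$, so $\x=\bigcup_{\alpha<\delta}\x_\alpha$ for some increasing chain of proper Scott sets $\x_\alpha$ of size $\leq\omega_1$, where $\delta\leq\omega_2$. If $\x$ itself has size $\leq\omega_1$ we are done by Knight and Nadel (Theorem \ref{th:scott_omega_1}), so assume $|\x|=\omega_2$ and $\delta=\omega_2$. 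Enumerate $\x=\{A_\xi\mid\xi<\omega_2\}$, and by passing to a closed unbounded set of indices arrange that $A_\xi\in\x_\xi$ for every $\xi$ (so that at stage $\xi$ the set we wish to absorb lives in the piece $\x_\xi$ we are currently working inside).

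The heart of the matter is the successor step, which is a version of Theorem \ref{th:ehren} relativised to a proper \emph{piece}. Given a countable — or size-$\omega_1$ — model $M_\xi$ of {\rm PA} with ${\rm SSy}(M_\xi)\subseteq\x_\xi$, and given $A_\xi\in\x_\xi$, I want $M_\xi\prec M_{\xi+1}$ with $A_\xi\in{\rm SSy}(M_{\xi+1})\subseteq\x_\xi$. First I would apply Lemma \ref{l:add} \emph{inside $\x_\xi$} to get $B\in\x_\xi/\fin$ such that any ultrafilter on $\x_\xi$ containing $B$ puts $A_\xi$ into the standard system of the ultrapower $\Pi_{\x_\xi}M_\xi/G$. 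Next, since $\x_\xi$ is a proper Scott set, it is arithmetically closed, hence decisive by Lemma \ref{th:decisive}; so for each $f:\n\to M_\xi$ coded in $M_\xi$ the set $\mathscr D_f=\{C\in\x_\xi/\fin\mid C\text{ decides }\langle B_{n,f}\mid n\in\n\rangle\}$ is dense in $\x_\xi/\fin$, exactly as in the proof of Theorem \ref{th:ehren}. There are at most $\omega_1$ many such $\mathscr D_f$ (as $|M_\xi|\leq\omega_1$), together with the single dense set below $B$; the poset $\x_\xi/\fin$ is proper and — crucially — has size $\leq\omega_1$ because $|\x_\xi|\leq\omega_1$. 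Therefore ${\rm PFA}^-$ applies and yields an ultrafilter $G$ on $\x_\xi$ with $B\in G$ meeting every $\mathscr D_f$. Setting $M_{\xi+1}=\Pi_{\x_\xi}M_\xi/G$ (then, if necessary, passing to an elementary submodel of size $\leq\omega_1$ containing $M_\xi$ and a code for $A_\xi$), \L o\'s's Lemma gives $M_\xi\prec M_{\xi+1}$, the decisiveness dense sets force ${\rm SSy}(M_{\xi+1})\subseteq\x_\xi$, and $B\in G$ gives $A_\xi\in{\rm SSy}(M_{\xi+1})$.

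Finally I would organise the chain: let $M_0$ be any countable model of {\rm PA} with ${\rm SSy}(M_0)\subseteq\x_0$ (Scott's Theorem \ref{th:scott}), perform the successor step above at each $\xi<\omega_2$, and take unions at limits; since each $M_\xi$ has size $\leq\omega_1$, the standard system is absorbed cofinally, so $M=\bigcup_{\xi<\omega_2}M_\xi$ has ${\rm SSy}(M)=\bigcup_{\xi<\omega_2}{\rm SSy}(M_{\xi+1})\subseteq\bigcup_{\xi}\x_\xi=\x$, and every $A_\xi$ appears. I expect the main obstacle to be bookkeeping at limit stages rather than anything deep: one must ensure that the union model retains standard system inside $\x$ and that the size bound $|M_\xi|\leq\omega_1$ is preserved along limits of cofinality $\omega_1$ (which is why the reflection to an elementary submodel of size $\omega_1$ in the successor step is needed, and why the arrangement $A_\xi\in\x_\xi$ must respect a club). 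The one genuinely essential point — and the reason ${\rm PFA}^-$ suffices — is that each piece $\x_\xi/\fin$ has size only $\omega_1$, so the weak forcing axiom for proper posets of size $\omega_1$ is exactly what the successor step consumes.
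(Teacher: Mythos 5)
Your proposal is correct and follows essentially the same route as the paper: localize to a single proper piece $\x_\alpha$ of size $\leq\omega_1$ containing ${\rm SSy}(M_\xi)$ and the target set, run the ultrapower construction of Theorem \ref{th:ehren} there, and observe that ${\rm PFA}^-$ suffices precisely because $\x_\alpha/\fin$ has size $\leq\omega_1$. The only difference is presentational: the paper states this as a one-step verification of the $\omega_2$-Ehrenfeucht Principle for piecewise proper Scott sets and then cites Theorem \ref{th:ehrenprin}, whereas you inline the $\omega_2$-length elementary chain; the bookkeeping you worry about is exactly what Theorem \ref{th:ehrenprin} already handles.
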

\begin{proof}
It suffices to show that the $\omega_2$-Ehrenfeucht Principle holds
for piecewise proper Scott sets of size $\omega_2$. So suppose $M$
is a model of {\rm PA} of size $\omega_1$ and $\x$ is a piecewise
proper Scott set of size $\omega_2$ such that ${\rm SSy}(M)\subseteq
\x$.  Since $\x$ is piecewise proper, it is the union of a chain of
proper Scott sets $\x_\xi$ for $\xi<\omega_2$. Fix any $A\in \x$,
then there is an ordinal $\alpha<\omega_2$ such that ${\rm SSy}(M)$
and $A$ are contained in $\x_\alpha$. Since $\x_\alpha$ is proper,
the $\omega_2$-Ehrenfeucht Principle holds for $\x_\alpha$ by
Theorem \ref{th:ehren}.  Thus, there is $M\prec N$ such that
$A\in{\rm SSy}(N)\subseteq \x_\alpha\subseteq \x$.
\end{proof}

\section{When is $\x/\fin$ Proper or Piecewise Proper?}\label{sec:proper}
Here, I give an overview of what is known about the existence of
proper and piecewise proper Scott sets. Recall that for a  property
of posets $\mathscr P$, if $\x$ is arithmetically closed and
$\x/\fin$ has $\mathscr P$, I say that \emph{$\x$ has property
$\mathscr P$}.
\begin{theorem}
Any arithmetically closed countable Scott set is proper and
$\power(\n)$ is proper.
\end{theorem}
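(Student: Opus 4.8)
The plan is to deduce both assertions from the two standard preservation facts recalled in Section \ref{sec:settheory}: every c.c.c.\ poset and every countably closed poset is proper (see \cite{jech:settheory} or \cite{shelah:proper}). Recall first that, by the paper's conventions, the phrase ``$\x$ is proper'' abbreviates ``$\x$ is arithmetically closed and $\x/\fin$ is a proper poset''. In the first assertion arithmetic closure is assumed outright, and $\power(\n)$ is trivially arithmetically closed, so in both cases all that remains is to verify the properness of the poset $\x/\fin$, respectively $\power(\n)/\fin$.

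For a countable arithmetically closed Scott set $\x$, the underlying set of $\x/\fin$ is the collection of infinite members of $\x$, which is countable; hence $\x/\fin$ has no uncountable antichain and so is trivially c.c.c., and therefore proper. (One could alternatively note that, since any infinite $A\in\x$ splits into two infinite subsets computable from $A$ and hence lying in $\x$, the separative quotient of $\x/\fin$ is a countable atomless separative partial order and so is forcing equivalent to Cohen forcing.)

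For $\power(\n)$ I would instead verify that $\power(\n)/\fin$ is countably closed, which again yields properness. Given a descending sequence $A_0\geq A_1\geq\cdots$ in $\power(\n)/\fin$, i.e., infinite sets with $A_m\subseteq_\fin A_n$ whenever $m\geq n$, each finite intersection $A_0\cap\cdots\cap A_n$ contains $A_n$ minus a finite set and is therefore infinite; picking $b_n\in A_0\cap\cdots\cap A_n$ with $b_0<b_1<\cdots$ gives an infinite set $B=\{b_n\mid n\in\n\}$ satisfying $B\setminus A_k\subseteq\{b_0,\ldots,b_{k-1}\}$ for every $k$, hence $B\subseteq_\fin A_k$, i.e., $B\leq A_k$, for all $k$. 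So $B$ is a lower bound for the sequence, $\power(\n)/\fin$ is countably closed, and thus proper.

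I do not anticipate a real obstacle here: both steps are routine applications of known preservation theorems, and the only points needing a little care are bookkeeping the two senses of ``proper'' and observing that the $\power(\n)/\fin$ argument is exactly the standard pseudo-intersection construction witnessing countable closure.
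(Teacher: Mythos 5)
Your proposal is correct and follows exactly the paper's route: the countable arithmetically closed Scott set is handled via the c.c.c.\ (trivially, since the poset is countable) and $\power(\n)/\fin$ via countable closure, both of which imply properness. You simply supply the routine details (the pseudo-intersection construction) that the paper leaves implicit.
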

\begin{proof}
The class of proper posets includes c.c.c.\ and countably closed
posets.  An arithmetically closed countable Scott set is c.c.c. and
$\power(\n)$ is countably closed.
\end{proof}
We are already in a better position than with c.c.c.\ Scott sets
since we have an instance of an uncountable proper Scott set, namely
$\power(\n)$. This does not, however, give us a new instance of
Scott's Problem since we already know by the Compactness Theorem
that there are models of {\rm PA} with standard system $\power(\n)$.

The easiest way to show that a poset is proper is to show that it is
c.c.c.\ or countably closed. We already know that if a Scott set is
c.c.c., then it is countable (Theorem \ref{th:wrong}). So this
condition gives us no new proper Scott sets. It turns out that
neither does the countably closed condition.
\begin{theorem}  If $\x$ is any Scott set such that $\x/\fin$ is countably closed,
then $\x=\power(\n)$.
\end{theorem}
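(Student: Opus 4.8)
The plan is to prove the contrapositive: if $\x\neq\power(\n)$, I will produce a descending $\omega$-sequence of conditions in $\x/\fin$ having no lower bound, so that $\x/\fin$ is not countably closed. (As throughout the paper, treating $\x$ as a poset presupposes that $\x$ is arithmetically closed.)

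Fix $X\subseteq\n$ with $X\notin\x$; note that $X$ is infinite and co-infinite, since every finite or cofinite set, being computable, lies in the Scott set $\x$. For $n\in\n$ put $B_n=\{m\in\n\mid (m)_i=X(i)\text{ for all }i<n\}$, where $(m)_i$ denotes the $i$-th binary digit of $m$. Each $B_n$ depends only on the finite string $X\upharpoonright n$, hence is a computable set, so $B_n\in\x$; moreover $B_n$ is infinite and $B_{n+1}\subseteq B_n$. Thus $\la B_n\mid n\in\n\ra$ is a descending sequence of conditions in $\x/\fin$.

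Now suppose toward a contradiction that this sequence has a lower bound, i.e.\ an infinite $C\in\x$ with $C\subseteq_\fin B_n$ for every $n$. Enumerate $C$ in increasing order as $c_0<c_1<c_2<\cdots$, an enumeration computable from $C$. The key point is that for each $k$, all but finitely many elements of $C$ lie in $B_{k+1}$, and every element of $B_{k+1}$ has $k$-th binary digit $X(k)$; hence $\lim_{j\to\infty}(c_j)_k=X(k)$. Since $(j,k)\mapsto(c_j)_k$ is computable from $C$, the Limit Lemma gives $X\leq_T C'$. But $\x$ is arithmetically closed, hence closed under the Turing jump, so $C\in\x$ implies $C'\in\x$ and therefore $X\in\x$, contradicting the choice of $X$. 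So $\la B_n\mid n\in\n\ra$ has no lower bound in $\x/\fin$, and $\x/\fin$ is not countably closed.

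I do not anticipate a serious obstacle; the construction is straightforward and the only step needing care is the verification that an arbitrary lower bound $C$ of the sequence computes $X$ modulo a single jump. That is precisely the limit computation above, and it is the one place where arithmetic closure of $\x$ is used (the present coding makes $X$ reducible to $C'$, not to $C$ outright, which is all that is needed here).
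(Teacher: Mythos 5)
Your core construction is essentially the paper's, just run contrapositively: the paper fixes an arbitrary $A\subseteq\n$, forms the sets $B_n=\{m\mid (m)_n=\chi_A(n)\}$ and their finite intersections $A_m=\cap_{n\leq m}B_n$ (which are exactly your $B_{n}$'s up to reindexing), applies countable closure to get a lower bound $C$, and observes that $A$ is arithmetic in $C$; your Limit Lemma phrasing of that last step is a careful and correct way to say the same thing. The one substantive difference is the treatment of arithmetic closure. You import it as a standing hypothesis via the paper's convention, whereas the paper's proof begins by \emph{deriving} arithmetic closure from countable closure: given any sequence $\la B_n\mid n\in\n\ra$ coded in $\x$, one builds a descending sequence $B_0^*\geq B_1^*\geq\cdots$ by intersecting at stage $n+1$ with whichever of $B_{n+1}$, $\n-B_{n+1}$ keeps the set infinite, and a lower bound of this sequence decides $\la B_n\mid n\in\n\ra$; Lemma \ref{th:decisive} then yields arithmetic closure. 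This makes the theorem genuinely stronger --- it applies to \emph{any} Scott set with $\x/\fin$ countably closed, with no closure hypothesis --- and that extra generality is free, since the deciding-condition argument uses only the countable closure you already have. If you intend your statement to match the paper's (``any Scott set''), you should add that preliminary step; as written, your argument proves the theorem only for arithmetically closed $\x$. Everything else (infinitude and computability of your $B_n$, the reduction $X\leq_T C'$, the use of closure under the jump) is correct.
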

\begin{proof}
First, I claim that if $\x/\fin$ is countably closed, then $\x$ is
arithmetically closed. I will show that for every sequence $\la
B_n\mid n\in\n\ra$ coded in $\x$, there is $C\in\x$ deciding
\hbox{$\la B_n\mid n\in\n\ra$}. This suffices by Theorem
\ref{th:decisive}. Fix \hbox{$\la B_n\mid n\in\n\ra$} coded in $\x$.
Define a descending sequence $B_0^*\geq B_1^*\geq\cdots\geq
B_n^*\geq\cdots$ of elements of $\x/\fin$ by induction on $n$ such
that $B_0^*=B_0$ and $B_{n+1}^*$ is $B_n^*\cap B_{n+1}$ if this
intersection is infinite or $B_n^*\cap (\n-B_{n+1})$ otherwise. By
countable closure, there is $C\in \x/\fin$ below this sequence.
Clearly $C$ decides $\la B_n\mid n\in\n\ra$. Therefore $\x$ is
arithmetically closed. Now I will show that every $A\subseteq \n$ is
in $\x$. Define $B_n=\{m\in\n\mid (m)_n=\chi_A(n)\}$ as before. Let
$A_m=\cap_{n\leq m}B_n$ and observe that $A_0\geq A_1\geq\cdots \geq
A_m\geq\ldots$ in $\x/\fin$. By countable closure, there exists
$C\in \x/\fin$ such that $C\subseteq_{\fin}A_m$ for all $m\in \n$.
Thus, $C\subseteq_\fin B_n$ for all $n\in \n$. It follows that
$A=\{n\in\n\mid \break \exists m\,\forall k\in C\,\text{ if }k>m,
\text{ then } (k)_n=1\}$. Thus, $A$ is arithmetic in $C$, and hence
$A\in\x$ by arithmetic closure. Since $A$ was arbitrary, this
concludes the proof that $\x=\power(\n)$.
\end{proof}
The countable closure condition can be weakened slightly. If a poset
is just strategically $\omega$-closed, it is enough to imply
properness.
\begin{definition}
Let $\p$ be a poset, then $\mathscr{G}_{\p}$ is the following
infinite game between players I and II:  Player I plays an element
$p_0\in\p$, and then player II plays $p_1\in \p$ such that $p_0\geq
p_1$. Then player I plays $p_1\geq p_2$ and player II plays $p_2\geq
p_3$. Player I and II alternate in this fashion for $\omega$ steps
to end up with the descending sequence $p_0\geq p_1\geq
p_2\geq\ldots\geq p_n\geq \ldots$.  Player II wins if the sequence
has a lower bound in $\p$. Otherwise, player I wins. A poset $\p$ is
\emph{strategically $\omega$-closed} if player II has a winning
strategy in the game $\mathscr{G}_{\p}$.
\end{definition}
Observe that if $\x$ is a Scott set such that $\x/\fin$ is
strategically $\omega$-closed, then $\x$ has to be arithmetically
closed. To see this, suppose that $\x/\fin$ is strategically
$\omega$-closed and $\la B_n\mid n\in\n\ra$ is a sequence coded in
$\x$. We will find $C\in\x/\fin$ deciding the sequence by having
player I play either $B_n$ or $\n-B_n$ intersected with the previous
move of player II at the $n^{\text{th}}$ step of the game. It is not
known whether there are Scott sets that are strategically
$\omega$-closed but not countably closed.

One might wonder at this point whether it is possibly the case that
a Scott set is proper only when it is countable or $\power(\n)$ and
a Scott set is piecewise proper only when it is of size
$\leq\omega_1$. In a forthcoming paper \cite{gitman:proper}, I show
the following results about the existence of proper and piecewise
proper Scott sets.

First, I show that one can obtain uncountable proper Scott sets
other that $\power(\n)$ by considering when the $\power(\n)$
\emph{of} $V$ remains proper in a generic extension after forcing to
add new reals.
\begin{theorem}
If {\rm CH} holds and $\p$ is a c.c.c.\ poset, then
$\power^V(\n)/\fin$ remains proper in $V[g]$ where $g\subseteq \p$
is $V$-generic.
\end{theorem}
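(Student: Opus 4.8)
The plan is to reduce the statement to a two‑step‑iteration computation. Write $\q := \power^V(\n)/\fin$. In $V[g]$ the poset $\q$ still has size $\omega_1$, since under {\rm CH} and the c.c.c.\ of $\p$ we have $|\power^V(\n)| = \omega_1^V = \omega_1^{V[g]}$. The two facts I would use about $\q$ are that it is countably closed in $V$ (a tower $A_0\supseteq_\fin A_1\supseteq_\fin\cdots$ of infinite sets always has a pseudo‑intersection, built absolutely from the tower by the usual diagonal argument — this was already observed above for $\power(\n)$) and that $\p$ is c.c.c., hence proper, in $V$.

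Step one is to show, working in $V$, that the product $\p\times\q$ is proper; this is the folklore fact that the product of a c.c.c.\ forcing with a countably closed forcing is proper. Fix a large $\mu$ and a countable $M\prec H_\mu$ with $\p,\q\in M$ and $(p_0,q_0)\in(\p\times\q)\cap M$. Since $\p$ is c.c.c., every condition of $\p$ is $M$‑generic, so I keep the first coordinate equal to $p_0$ and diagonalize only on the second. For each maximal antichain $\mathcal A\in M$ of $\p\times\q$, with associated dense open set $D_{\mathcal A}\in M$, the set
\[
D^*_{\mathcal A} \;=\; \{\, q\in\q \;:\; \{\,p\in\p \;:\; (p,q)\in D_{\mathcal A}\,\}\text{ is predense in }\p\,\}
\]
lies in $M$, and it is dense in $\q$: given $q$, use a countable maximal antichain $\{p_i : i<\omega\}$ of $\p$ and countable closure of $\q$ to build $q'\leq q$ together with $p_i'\leq p_i$ satisfying $(p_i',q')\in D_{\mathcal A}$ for all $i$, whence $q'\in D^*_{\mathcal A}$. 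Using countable closure and the elementarity of $M$, build a descending $\omega$‑chain $q_0\geq q_1\geq\cdots$ in $M$ meeting every $D^*_{\mathcal A}$ (for $\mathcal A\in M$), and let $q^*$ be a lower bound. I claim $(p_0,q^*)$ is $M$‑generic: given $\mathcal A\in M$ and $(\bar p,\bar q)\leq(p_0,q^*)$, the set $\{p : (p,q_{n+1})\in D_{\mathcal A}\}$ (for the stage $n$ at which $\mathcal A$ was handled) is predense and lies in $M$, so it contains a countable — by c.c.c.\ — maximal antichain $Z\subseteq M$ of $\p$; choosing $p\in Z$ compatible with $\bar p$, the condition $(p,q_{n+1})\in D_{\mathcal A}\cap M$ lies below some member of $\mathcal A\cap M$ by elementarity, and that member is compatible with $(\bar p,\bar q)$. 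Hence $\mathcal A\cap M$ is predense below $(p_0,q^*)$.

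Step two: $\p\times\q$ densely embeds into the two‑step iteration $\p*\check{\q}$ via $(p,q)\mapsto(p,\check{q})$, so $\p*\check{\q}$ is proper in $V$ as well. Since $\p$ is proper, the standard characterization of iterated properness (see \cite{shelah:proper}) — a two‑step iteration $\p*\dot\q$ is proper if and only if $\p$ is proper and $\p$ forces $\dot\q$ to be proper — yields $\Vdash_\p$ ``$\check{\q}$ is proper''. Finally, $\check{\q}$ is evaluated in $V[g]$ as the poset whose underlying set and almost‑inclusion order are those of $\power^V(\n)/\fin$ computed in $V$, which is exactly $\power^V(\n)/\fin$ as computed in $V[g]$; hence $\power^V(\n)/\fin$ is proper in $V[g]$.

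The main obstacle is step one, and within it the verification that the condition $(p_0,q^*)$ obtained by diagonalizing on the second coordinate is generic for the product: this is precisely where the c.c.c.\ of $\p$ does real work, letting one replace a predense subset of $\p$ by a countable — hence fully captured by $M$ — maximal antichain. The passage from ``$\p\times\q$ proper'' to ``$\p$ forces $\check{\q}$ proper'' is a black box from the general theory of proper forcing. One could instead try to argue directly in $V[g]$, using that for c.c.c.\ $\p$ a countable $M\prec H_\lambda^{V[g]}$ can be written as $(M\cap V)[g]$ and building an $(M\cap V)$‑generic condition of $\q$ in $V$ that remains $M$‑generic in $V[g]$; I expect that route to require a more delicate diagonalization against the generic filter $g$ itself.
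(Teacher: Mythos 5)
The paper itself does not prove this theorem --- it is quoted from the forthcoming paper \cite{gitman:proper} --- so there is no in-paper argument to compare yours against; I can only judge the proposal on its own terms, and it has a genuine gap at its central step, the density of $D^*_{\mathcal A}$ in $\q=\power^V(\n)/\fin$. You fix a countable maximal antichain $\{p_i\mid i<\omega\}$ of $\p$, shrink the second coordinate through $\omega$ stages using countable closure, and end with $q'$ together with conditions $p_i'\leq p_i$ such that $(p_i',q')\in D_{\mathcal A}$. But $\{p_i'\mid i<\omega\}$ need not be predense in $\p$: a condition compatible with $p_i$ need not be compatible with the stronger $p_i'$, so nothing shows that $\{p\in\p\mid (p,q')\in D_{\mathcal A}\}$ is predense. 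Repairing this requires catching a maximal antichain of $\p$ ``in the limit'' of the fusion, and for an uncountable c.c.c.\ poset this cannot be done by enumerating $\p$; nor does working inside a countable $N'\prec H_\theta$ suffice, since a maximal antichain of $\p\cap N'$ built externally need not be maximal in $\p$ (a Suslin tree $T$ already shows this: the off-branch immediate successors of a cofinal branch of $T\upharpoonright\delta$ form a maximal antichain of $T\upharpoonright\delta$ avoided by any node of level $\delta$ lying above that branch).

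This is not merely a presentational slip, because the ``folklore fact'' being proved --- that the product of a c.c.c.\ poset with a countably closed poset is proper --- is not a citable theorem of {\rm ZFC}. Properness is notoriously non-productive, and the standard counterexamples are built from exactly these two ingredients, a Suslin tree (c.c.c.) and a tailored countably closed poset whose product collapses $\omega_1$; such examples live under hypotheses compatible with the {\rm CH} assumption of the theorem. So Step One must use the specific combinatorics of $\power(\n)/\fin$ rather than bare countable closure. The route you dismiss in your final sentence is in fact the viable one: write a countable $M\prec H_\lambda^{V[g]}$ as $(M\cap V)[g]$, and diagonalize in $V$ against the $\p$-names in $M\cap V$ for maximal antichains of $\q$, using the c.c.c.\ to reduce each name to countably many ground-model candidate elements of $\q$ and then threading a branch through the resulting tree of intersections, exactly as in the proof of Lemma \ref{th:decisive}, to produce a single $V$-condition that remains $M$-generic in $V[g]$. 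Your Step Two (the passage from properness of $\p\ast\check\q$ to $\Vdash_\p$ ``$\check\q$ is proper'', and the absoluteness of the poset $\power^V(\n)/\fin$ between $V$ and $V[g]$) is unobjectionable, but it carries no weight until Step One is actually established.
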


In particular, if {\rm CH} holds in $V$ and we force to add a Cohen
real, then the $\power(\n)$ \emph{of} $V$ will be a proper Scott set
in the generic extension.

It is also possible to force the existence of \emph{many} proper
Scott sets of size $\omega_1$ and piecewise proper Scott sets of
size $\omega_2$.

\begin{theorem}
There is a generic extension of $V$ by a c.c.c.\ poset, which
contains continuum many proper Scott sets of size $\omega_1$.
\end{theorem}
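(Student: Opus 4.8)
The plan is to start from a model of {\rm CH} and blow up the continuum with finitely many Cohen reals at a time, harvesting one proper Scott set from each initial segment of the generic filter. Assume {\rm CH} holds in $V$ (the hypothesis under which the preceding theorem was stated), and let $\p$ be the finite-support product of $\omega_2$ copies of Cohen forcing, which is c.c.c. Fix a $V$-generic filter $G\subseteq\p$, so that $V[G]\models 2^\omega=\omega_2$. For $\alpha<\omega_2$, let $G_\alpha$ be the restriction of $G$ to the first $\alpha$ coordinates and set $\x_\alpha=\power^{V[G_\alpha]}(\n)$, the power set of $\n$ as computed in the intermediate model $V[G_\alpha]$. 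I claim that $\{\x_\alpha\mid\alpha<\omega_2\}$ is a family of $\omega_2=\mathfrak c$ many distinct proper Scott sets of size $\omega_1$ in $V[G]$, which gives the theorem.

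First I would verify that each $\x_\alpha$ is a Scott set of the right size. Since $V[G_\alpha]$ is a model of {\rm ZFC}, its collection of reals is closed under arithmetic definability (arithmetic definitions being absolute), so $\x_\alpha$ is arithmetically closed and hence a Scott set. For the size: adding at most $\omega_1$ Cohen reals over a model of {\rm CH} preserves {\rm CH}, and every $\alpha<\omega_2$ has $|\alpha|\le\omega_1$, so $V[G_\alpha]\models 2^\omega=\omega_1$; thus $\x_\alpha$ has size $\omega_1$ in $V[G_\alpha]$, and this persists in $V[G]$ since the c.c.c.\ forcing $\p$ preserves $\omega_1$.

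The crux is the properness of $\x_\alpha/\fin$, and here the cited preservation theorem does the work. The quotient forcing taking $V[G_\alpha]$ to $V[G]$ is forcing equivalent to the finite-support product of the remaining Cohen coordinates, which is c.c.c.\ provably in {\rm ZFC} (by the $\Delta$-system lemma) and therefore c.c.c.\ as computed in $V[G_\alpha]$. Since $V[G_\alpha]\models{\rm CH}$, applying the preceding theorem \emph{inside} $V[G_\alpha]$, with this tail forcing playing the role of $\p$, shows that $\power^{V[G_\alpha]}(\n)/\fin$ remains proper in $V[G]$. Together with arithmetic closure this says precisely that $\x_\alpha$ is a proper Scott set.

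It remains to see that the $\x_\alpha$ are pairwise distinct. If $\alpha<\beta<\omega_2$, then the $\alpha$-th Cohen real belongs to $V[G_\beta]$ but, being Cohen-generic over $V[G_\alpha]$, does not belong to $V[G_\alpha]$; hence $\x_\alpha\subsetneq\x_\beta$, so $\alpha\mapsto\x_\alpha$ is injective and we obtain $\omega_2=\mathfrak c$ many proper Scott sets of size $\omega_1$. I do not anticipate a genuine obstacle: the substantive step is entirely contained in the quoted preservation theorem, and the only points needing care are the routine forcing bookkeeping facts that {\rm CH} holds in each intermediate model $V[G_\alpha]$ and that the tail of the product remains c.c.c.\ as computed there---both standard for finite-support products of Cohen forcing.
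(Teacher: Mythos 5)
The paper does not actually prove this theorem here --- it is quoted from the forthcoming paper \cite{gitman:proper} --- so there is no in-text argument to compare against. Your proposal is correct and is exactly the natural derivation from the preservation theorem stated just above it: all the steps check out (CH persists to each intermediate model $V[G_\alpha]$ since $|\alpha|\le\omega_1$, the tail of the finite-support Cohen product is c.c.c.\ in $V[G_\alpha]$ by a ZFC-absolute $\Delta$-system argument, the preservation theorem relativized to $V[G_\alpha]$ yields properness of $\power^{V[G_\alpha]}(\n)/\fin$ in $V[G]$, arithmetic closure is absolute, and the coordinate-$\alpha$ Cohen real separates $\x_\alpha$ from $\x_\beta$). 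The only substantive input is the quoted preservation theorem, which you correctly treat as a black box.
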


\begin{theorem}
There is a generic extension of $V$ by a c.c.c.\ poset, which
contains continuum many piecewise proper Scott sets of size
$\omega_2$.
\end{theorem}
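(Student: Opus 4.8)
The plan is to exhibit an explicit c.c.c.\ forcing that generically produces continuum-many piecewise proper Scott sets of size $\omega_2$, in close analogy with the (stated but unproved) theorem producing continuum-many proper Scott sets of size $\omega_1$. The natural starting point is a model $V$ in which $\mathrm{CH}$ holds, so that $\power^V(\n)$ is a proper Scott set of size $\omega_1$; by the preceding theorem this properness survives passage to a c.c.c.\ extension. The idea is then to iterate: build a finite support c.c.c.\ iteration of length $\omega_2$ in which at each stage one adds a new real by a c.c.c.\ poset, so that the sequence $\langle \power^{V_\xi}(\n) \mid \xi < \omega_2\rangle$ of ``old reals'' of the intermediate models forms an increasing chain. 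Since each $\power^{V_\xi}(\n)$ is proper in $V_{\xi+1}$ (and, by a $\Delta^1_2$-absoluteness / bounding argument for the relevant dense sets, remains proper in the final extension $V_{\omega_2}$), each is a proper Scott set of size $\le \omega_1$ in $V_{\omega_2}$, and their union $\x = \bigcup_{\xi<\omega_2}\power^{V_\xi}(\n)$ is a piecewise proper Scott set of size $\omega_2$. To get \emph{continuum} many such $\x$, one runs this construction along a tree or a product of $\mathfrak{c} = \omega_2$ many mutually generic coordinates, so that distinct branches yield distinct chains and hence distinct unions.

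The key steps, in order, are: (i) fix a ground model of $\mathrm{CH}$ (or first force $\mathrm{CH}$); (ii) design the c.c.c.\ iteration so that cofinally often a Cohen-type real is added, guaranteeing $\power^{V_\xi}(\n) \subsetneq \power^{V_{\xi+1}}(\n)$ and that the chain is continuous in the appropriate sense; (iii) verify that for each $\xi$, the poset $\power^{V_\xi}(\n)/\fin$ is proper \emph{in the final model} — this is where one invokes the stated theorem about c.c.c.\ forcing preserving properness of $\power^V(\n)/\fin$ when $\mathrm{CH}$ held, applied with the intermediate model $V_\xi$ playing the role of $V$, noting that $V_{\omega_2}$ is a c.c.c.\ extension of $V_\xi$ and $\mathrm{CH}$ holds in $V_\xi$; (iv) check that $\bigcup_\xi \power^{V_\xi}(\n)$ is arithmetically closed (immediate, as each piece is) and has size exactly $\omega_2$ (it contains all reals of all intermediate models, and $\mathfrak{c}=\omega_2$ in the extension); (v) arrange a side-by-side version of the forcing — e.g.\ a finite support product of $\omega_2$ copies, or splitting at a tree of height $\omega_2$ — so that $2^{\omega}=\omega_2$ many pairwise-distinct piecewise proper Scott sets arise simultaneously, distinguished by which reals they contain.

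The main obstacle I expect is step (iii): ensuring that properness of $\power^{V_\xi}(\n)/\fin$ is not destroyed by the \emph{tail} of the iteration. Properness is in general not preserved by c.c.c.\ forcing for \emph{arbitrary} proper posets, so one must use the specific structure here — that $\power^{V_\xi}(\n)/\fin$ is (under $\mathrm{CH}$ in $V_\xi$) not merely proper but proper \emph{for a reason that reflects}, namely because the relevant $\sigma$-closed-like behavior on the old reals is witnessed by $\Delta^1_2$ or $\Sigma^1_2$ statements that are absolute between $V_\xi$ and any c.c.c.\ extension. Concretely, one shows that an $M$-generic condition for a countable $M \prec H_\lambda$ in $V_{\omega_2}$ can be found already using a countable fragment of the dense sets, and that a suitable such condition exists by a bounding argument inside $V_\xi$ together with the c.c.c.-ness (hence reflection of countable objects) of the tail forcing. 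Once this preservation lemma is in hand, assembling the chain and taking unions is routine, and the ``continuum many'' strengthening follows by the standard bookkeeping for product or tree iterations. The remaining bookkeeping — continuity of the chain at limits, and that limits of proper Scott sets of size $\le\omega_1$ along the chain are again Scott sets — is straightforward and I would not belabor it.
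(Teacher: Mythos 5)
This theorem is one of the results the paper only quotes from the forthcoming paper \cite{gitman:proper}; no proof appears in the text, so there is nothing internal to compare your argument against, and it must be judged on its own. Most of your outline is the natural one and is sound: start from a ground model of {\rm CH} (say {\rm GCH}), add reals by a finite-support c.c.c.\ forcing of length $\omega_2$, and note that each intermediate family of reals is an arithmetically closed Scott set of size $\omega_1$ whose own model satisfies {\rm CH}, so the quoted preservation theorem ({\rm CH} plus c.c.c.\ forcing preserves properness of the old $\power(\n)/\fin$) applies directly with the intermediate model as the ground model and the c.c.c.\ tail as the forcing. Your worry in step (iii) about needing a $\Delta^1_2$-absoluteness or reflection argument is misplaced: once you check that the tail is c.c.c.\ over $V_\xi$ and that {\rm CH} holds in $V_\xi$ (both routine for a finite-support product of Cohen forcing over a {\rm GCH} ground model), the quoted theorem does all the work.

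The genuine soft spot is the ``continuum many'' step. In a single linear iteration, every cofinal chain of the sets $\power^{V_\xi}(\n)$ has the \emph{same} union, namely $\power(\n)$ of the final model (every real appears at some stage below $\omega_2$ by a c.c.c./cofinality argument), so ``distinct chains hence distinct unions'' fails and you would produce exactly one Scott set --- and that one is $\power(\n)$, which the paper already dismisses as not a new example. What rescues the argument is the product version you mention only in passing: force with ${\rm Add}(\omega,\omega_2)$, and for each $S\subseteq\omega_2$ of size $\omega_2$ set $\x_S=\power(\n)\cap V[G\upharpoonright S]$. Then $\x_S$ is the union of the chain $\power(\n)\cap V[G\upharpoonright (S\cap\alpha)]$ for $\alpha<\omega_2$, each piece being arithmetically closed, of size $\omega_1$, satisfying {\rm CH} internally, and proper in $V[G]$ since $V[G]$ is a c.c.c.\ extension of $V[G\upharpoonright(S\cap\alpha)]$; and if $\alpha\in S\setminus S'$ then the Cohen real at coordinate $\alpha$ lies in $\x_S$ but not in $\x_{S'}$ by mutual genericity, so distinct $S$ of size $\omega_2$ give distinct $\x_S$ and there are at least $\mathfrak{c}$ many. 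If you replace your step (v) with this explicit distinctness argument, the proof is complete modulo the quoted preservation theorem.
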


Finally, Enayat showed in \cite{enayat:endextensions} that {\rm ZFC}
proves the existence of an arithmetically closed Scott set of size
$\omega_1$ which is \emph{not} proper.
\begin{theorem}[Enayat, 2006]
There is an arithmetically closed Scott set $\x$ such that $\x/\fin$
collapses $\omega_1$. Hence $\x$ is not proper.
\end{theorem}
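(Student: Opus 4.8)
The goal is to produce an arithmetically closed Scott set $\x$ of size $\omega_1$ for which forcing with $\x/\fin$ collapses $\omega_1$; since a proper poset preserves $\omega_1$, such an $\x$ cannot be proper. (An arithmetically closed countable Scott set is c.c.c.\ and $\power(\n)$ is countably closed, so both are proper; hence size exactly $\omega_1$ is the natural target.)

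The plan is to arrange that $\x/\fin$ contains the tree $(\omega_1^{<\omega},\supseteq)$ of finite sequences of countable ordinals, ordered by reverse extension, as a \emph{complete} suborder below $\n$. This tree collapses $\omega_1$: a generic filter is a branch $b\colon\omega\to\omega_1$, and since $\{s : \xi\in\mathrm{ran}(s)\}$ is dense for each $\xi<\omega_1$, the branch $b$ enumerates $\omega_1^V$, so every $\x/\fin$-generic extension will contain a surjection of $\omega$ onto $\omega_1^V$. To place such a copy inside $\x/\fin$, I would build by a transfinite recursion of length $\omega_1$ an increasing continuous chain $\langle\x_\alpha : \alpha<\omega_1\rangle$ of countable arithmetically closed Scott sets, together with a family $\{B_s : s\in\omega_1^{<\omega}\}$ of infinite subsets of $\n$, all lying in $\x:=\bigcup_{\alpha<\omega_1}\x_\alpha$, such that $B_\emptyset=\n$ and, for each node $s$, the sets $\{B_{s^{\frown}\xi} : \xi<\omega_1\}$ are pairwise almost disjoint, each $\subseteq_\fin B_s$, with $B_s\setminus B_{s^{\frown}\xi}$ infinite. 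Then $s\mapsto B_s$ is an injective order-embedding of $(\omega_1^{<\omega},\supseteq)$ into $\x/\fin$ — comparable nodes map to compatible conditions, incomparable nodes to almost disjoint, hence incompatible, ones — and it is complete provided every infinite $A\in\x$ has a \emph{reduction}: a node $s_A$ such that $B_t\cap A$ is infinite for every $t\supseteq s_A$. Given all this, $\x/\fin$ has a complete suborder that collapses $\omega_1$, so $\x/\fin$ collapses $\omega_1$; as $|\x|=\omega_1$ and $\x$ is arithmetically closed by construction, the theorem follows.

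The recursion alternates two kinds of steps: growing the tree by attaching new children $B_{s^{\frown}\xi}$, with $\xi$ fresh, so that eventually every node of $\omega_1^{<\omega}$ is realized and no node is a leaf (forcing generic branches to have length exactly $\omega$ and to hit every $\xi<\omega_1$); and closing under arithmetic definability to pass from $\x_\alpha$ to $\x_{\alpha+1}$, taking unions at limits. A bookkeeping schedule must also assign to every infinite set $A$ that enters $\x$ a reduction node $s_A$. The delicate point is the interaction between arithmetic closure and completeness: closing under definability at once produces ``weaving'' sets — for instance every second element of the complement of some $B_t$ — which are compatible with arbitrarily deep nodes along many branches yet contain no $B_s$, so the stronger goal of making $\{B_s\}$ \emph{dense} in $\x/\fin$ is hopeless. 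The remedy, and the technical core of the argument, is to secure completeness instead: when a new set $A$ enters $\x$ one designates (or creates) a node $s_A$, and thereafter every child attached at $s_A$ or below it is required to meet $A$ infinitely. One must then check that such children can still be chosen pairwise almost disjoint and in the needed abundance — each new child below a node $t$ need only meet the countably many sets currently committed to $t$ or its ancestors infinitely, and avoid the countably many earlier siblings, both achievable by a routine diagonalization inside the infinite set $B_t$. The bulk of the work is verifying that all the demands on the $B_s$'s — realizing the full tree $\omega_1^{<\omega}$, keeping every branch infinite, honouring every reduction commitment, maintaining arithmetic closure, and keeping $|\x|=\omega_1$ — can be met together along the $\omega_1$ stages; the collapse of $\omega_1$, and hence the failure of properness, is then immediate.
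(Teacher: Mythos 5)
The first thing to say is that the paper does not actually prove this statement: the existence of the arithmetically closed $\x$ collapsing $\omega_1$ is quoted from Enayat \cite{enayat:endextensions}, and the only argument the paper supplies is the final sentence, that a poset collapsing $\omega_1$ cannot be proper (which you also give, correctly). So your construction is necessarily your own route, and I have to judge it on its own terms. The skeleton is reasonable: a complete suborder of $\x/\fin$ isomorphic to $(\omega_1^{<\omega},\supseteq)$ would indeed force a collapse of $\omega_1$, and you are right that density of $\{B_s\}$ is hopeless and that completeness, i.e.\ the existence of reductions, is the thing to aim for.

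However, there is a genuine gap at exactly the step that carries all the weight: the claim that every infinite $A\in\x$ can be assigned a reduction $s_A$, ``designated or created.'' Take any infinite branch $f\in\omega_1^{\omega}$ of your tree. All of its initial segments are realized by some countable stage, so the tower $B_{f\upharpoonright 0}\supseteq_\fin B_{f\upharpoonright 1}\supseteq_\fin\cdots$ sits inside some countable $\x_\alpha$. If $A$ is any pseudo-intersection of this tower ($A$ infinite, $A\subseteq_\fin B_{f\upharpoonright n}$ for all $n$), then $A$ is almost disjoint from every off-branch node $B_t$, since such a $t$ extends a node incomparable with some $f\upharpoonright n$. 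Because every node of the finished tree has off-branch descendants, \emph{no} node whatsoever --- existing or freshly attached --- can be a reduction of $A$: every $s$ compatible with $A$ has a descendant incompatible with $A$. So if such an $A$ ever enters $\x$, completeness fails irreparably, and arithmetic closure is precisely the mechanism that threatens to put one in: whenever the sequence $\langle B_{f\upharpoonright n}\mid n\in\n\rangle$ is coded by a single set in $\x$, the diagonal pseudo-intersection is arithmetic in that code and is therefore forced into $\x$. Your commitment bookkeeping only governs how new children are attached; it says nothing about whether the closure steps produce sets admitting no reduction at all. To repair the argument you would need to add, and maintain through $\omega_1$ stages of arithmetic closure, the invariant that no branch tower acquires a pseudo-intersection in $\x$, and you would also need to check that $s_A$ can always be hung somewhere consistent with the commitments already sitting on its ancestors (if $A\cap A'\cap B_u$ is finite for some $A'$ committed above $u$, you cannot place $B_{s_A}\subseteq_\fin A$ below $u$). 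Neither point is routine; together they are the real content of the theorem, and as written the proposal does not establish it.
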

Clearly $\x/\fin$ cannot be proper since proper posets preserve
$\omega_1$.

Recall that any arithmetically closed Scott set of size $\omega_1$
is trivially piecewise proper. It follows that there are piecewise
proper Scott sets which are not proper. It is not clear whether
every proper Scott has to be piecewise proper.
\section{Questions}
\begin{question}
Can {\rm ZFC} or {\rm ZFC} + {\rm PFA} prove the existence of an
uncountable proper Scott set other than $\power(\n)$?
\end{question}
\begin{question}
Is it consistent with {\rm ZFC} that there are proper Scott sets of
size $\omega_2$ other than $\power(\n)$?
\end{question}
\begin{question}
Are there Scott sets that are strategically $\omega$-closed but not
countably closed?
\end{question}
\begin{question}
Does the $\omega_2$-Ehrenfeucht Principle hold or fail
(consistently)?
\end{question}
\begin{question}
Does the $\omega_2$-Ehrenfeucht Principle hold for models with a
countable standard system? That is, can we remove the assumption of
\emph{arithmetic closure} from Theorem \ref{th:countehren}?
\end{question}
\bibliographystyle{alpha}
\bibliography{database}
\end{document}